\begin{document}

\newtheorem{thm}{Theorem} %
\newtheorem{lem}[thm]{Lemma} %
\newtheorem{cor}[thm]{Corollary} %
\newtheorem{prop}[thm]{Proposition} %

\theoremstyle{remark} %
\newtheorem*{rem}{Remark} %

\def\C{{\mathbb C}} %
\def\F{{\mathbb F}} %
\def\N{{\mathbb N}} %
\def\Q{{\mathbb Q}} %
\def\R{{\mathbb R}} %
\def\Z{{\mathbb Z}} %
\def\e{{\mathbf e}} %
\def\x{\bold x} %

\def\eps{\varepsilon} %
\def\Tr{\operatorname{Tr}} %

\def\({\left(} %
\def\){\right)} %
\def\[{\left[} %
\def\]{\right]} %

\def\fl#1{\left\lfloor#1\right\rfloor} %
\def\rf#1{\left\lceil#1\right\rceil} %


\title{\bf Partial Gaussian Sums in Finite Fields}

\author{{\sc Ke Gong} \\
        Department of Mathematics, Henan University \\
        Kaifeng 475004, P. R. China \\
        {\tt kg@henu.edu.cn}}

\date{}

\maketitle

\begin{abstract}
We generalize Burgess' results on partial Gaussian sums to arbitrary finite fields. The main
ingredients are the classical method of amplification, two deep results on multiplicative energy
for subsets in finite fields which are obtained respectively by the tools from additive
combinatorics and geometry of numbers, and a technique of Chamizo for treating the difficulty
caused by additive character. Our results include the recent works on character sums in finite
fields by M.-C.~Chang and S.~V.~Konyagin.
\end{abstract}

\section{Introduction}

Let $p$ be a prime, $\chi$ a non-principal character modulo $p$. We denote $e_p(y):=\exp(2\pi
iy/p)$ as usual. Sums of the form
\begin{equation}\label{eq:definition}
\sum_{x=N}^{N+H}\chi(x)e_p(ax),
\end{equation}
are often encountered in analytic number theory.

We call the sums~\eqref{eq:definition} {\it pure} character sums if $a\equiv 0\pmod{p}$, otherwise
{\it mixed} character sums. If $H=p$ we say the sums~\eqref{eq:definition} {\it complete},
otherwise {\it incomplete} (or {\it partial} as Burgess used).

In the case of $a\not\equiv 0\pmod{p}$ and $H<p$, sums~\eqref{eq:definition} are usually called
partial Gaussian sums, which have been well studied by Vinogradov~\cite{V} and Burgess~\cite{B88}.
In this paper we try to generalize Burgess' results to arbitrary finite fields.

By a well-known generalization of the P\'olya-Vinogradov inequality we have
$$
\sum_{x=N}^{N+H}\chi(x)e_p(ax)\ll p^{1/2}\log p.
$$
For pure character sums, it was shown by Burgess \cite{B63} that for any positive integer $r$ we
have
\begin{equation}\label{eq:Burgess_a=0}
\sum_{x=N}^{N+H}\chi(x)\ll H^{1-1/r}p^{(r+1)/{4r^2}}\log p.
\end{equation}
Fifteen years later, by a modification of his method in proving~\eqref{eq:Burgess_a=0},
Burgess~\cite{B88} proved the following estimates for general partial Gaussian sums.

\begin{thm}\label{thm:Burgess}
Let $\chi$ be a non-principal character modulo a prime $p$. Then for any integers $r\ge 2$, $a$,
$N$ and $1\le H < p$ we have
\begin{equation}\label{eq:Burgessgeneral}
\sum_{x=N}^{N+H}\chi(x)e_p(ax)\ll H^{1-1/r}p^{1/{4(r-1)}}\log^2 p.
\end{equation}
\end{thm}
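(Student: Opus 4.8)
The plan is to run Burgess' amplification method---the engine behind \eqref{eq:Burgess_a=0}---while carrying the additive factor $e_p(ax)$ through every step. Write $I=[N,N+H]$ and $S=\sum_{x\in I}\chi(x)e_p(ax)$. The first step is approximate shift-invariance: for each $m\le M$ and each prime $\ell\in(P,2P]$ coprime to $p$ (with $MP<p$, to avoid wrap-around) one has $S=\sum_{x\in I}\chi(x+m\ell)\,e_p(a(x+m\ell))+O(MP)$, the error measuring the two boundary segments. Factor $\chi(x+m\ell)=\chi(m)\chi(\bar m x+\ell)$ with $m\bar m\equiv1\pmod p$, write $e_p(a(x+m\ell))=e_p(ax)e_p(am\ell)$, and substitute $w=\bar m x$, so that $x=mw$, $ax=amw$, and $\chi(\bar m x+\ell)=\chi(w+\ell)$; since the factor $e_p(am\ell)$ is constant in $w$ it drops out upon taking absolute values, and one is left with
$$\Phi\,|S|\ \le\ \sum_{m\le M}\ \sum_{\ell}\ \Bigl|\sum_{w\in\bar m I}\chi(w+\ell)\,e_p(amw)\Bigr|\ +\ O(\Phi MP),$$
where $\Phi$ is the number of admissible pairs $(m,\ell)$. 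Thus the estimate is reduced to an average of the short character sums $\sum_\ell\chi(w+\ell)$, weighted by the multiplicities of the dilated intervals $\bar m I$ and twisted by the additive phase $e_p(amw)$, whose \emph{frequency $am$ depends on the dilation $m$}.

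Next I would apply H\"older's inequality over the pairs $(m,\ell)$ with exponent $2r$, expand the resulting $2r$-th power, and complete the inner sum over $\ell$ to a full residue system. The phase contributes a factor $e_p\bigl(am\,\delta\bigr)$ with $\delta=w_1+\cdots+w_r-w_{r+1}-\cdots-w_{2r}$ that carries no $\ell$, so it sits outside the $\ell$-sum; the completed $\ell$-sum is therefore the \emph{purely multiplicative} complete sum $\sum_{\ell\bmod p}\chi\bigl(\prod_{i\le r}(w_i+\ell)\prod_{j>r}\overline{(w_j+\ell)}\bigr)$, which Weil's bound controls exactly as in the pure case ($\ll\sqrt p$ off the diagonal and $\ll p$ on it). On the diagonal the $w_i$ pair up, so $\delta=0$ and the troublesome phase is trivial; there the contribution is governed by the multiplicative energy, which in the prime field counts solutions of $m_2x_1\equiv m_1x_2\pmod p$ with $m_i\le M$, $x_i\in I$ and is $\ll(MH)^{1+o(1)}+M^2H^2/p$ by the divisor bound. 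Optimising $M$ against $P$ then produces the exponents in \eqref{eq:Burgessgeneral}.

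The real obstacle is the additive character on the \emph{off-diagonal} terms, where $\delta\ne0$ and the phase $e_p(am\delta)$ must be summed over $m$ against the dilation constraints $w_i\in\bar m I$. Because the frequency $am$ varies with $m$, this phase cannot be pulled out nor aligned with $\chi$ the way it can be when $a\equiv0$, and a naive bound forfeits the saving that Weil provides off the diagonal. I would resolve this with Chamizo's device, which disentangles the $m$-dependent frequency---morally by completing in the additive variable as well and bounding the resulting Gauss-type and Kloosterman-type sums, all of size $\ll p^{1/2}$---so that the off-diagonal count survives with only a logarithmic loss. This is the step I expect to be delicate; it is what degrades the $p$-exponent from the pure value $(r+1)/(4r^2)$ to $1/(4(r-1))$, forces $r\ge2$, and supplies the second factor $\log p$ in \eqref{eq:Burgessgeneral}. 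The remaining work---choosing $M$ and $P$ and tracking the error terms---is routine.
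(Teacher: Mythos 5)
Your proposal is not being compared against a proof in the paper, because the paper contains none: Theorem~\ref{thm:Burgess} is quoted from Burgess~\cite{B88}, and the nearest argument actually written out is the proof of Theorem~\ref{thm:CG}, which runs Chamizo's version of the amplification in $\F_{p^n}$. Measured against either that proof or Burgess's method, your H\"older step has a structural flaw. You apply H\"older over the $\Phi\approx MP$ pairs $(m,\ell)$, so that the \emph{long} dilated sum over $w\in\bar m I$ (of length $H$) sits inside the $2r$-th power. Expanding that power, for each $m$ the diagonal tuples (the $w_i$'s pairing up) contribute $\ll r!\,H^rP$ and the off-diagonal ones $\ll H^{2r}\sqrt{p}\,\log p$ (Weil after completing the $\ell$-sum, which, note, cannot be done ``by positivity'' here since the $\ell$-sum is not inside an even power), so the $2r$-th moment is $\ll M\,(r!\,H^rP+H^{2r}\sqrt{p}\,\log p)$ and H\"older returns $|S|\ll H^{1/2}+H\,(p^{1/2}\log p/P)^{1/(2r)}+O(MP)$. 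The parameter $M$ has cancelled: in this arrangement the dilations buy nothing, the bound is nontrivial only when $P\gg p^{1/2}$, and then the boundary error $O(MP)\ge p^{1/2}$ already exceeds the right-hand side of \eqref{eq:Burgessgeneral} throughout the range $H\le p^{1/2}$. In particular your arrangement cannot even recover the pure bound \eqref{eq:Burgess_a=0}. Burgess's gain requires the opposite arrangement: the \emph{short} shift sum (over $\ell\le P\approx p^{1/(2r)}$) inside the $2r$-th power, the dilated variable $w=\bar m x$ collected with multiplicity $\nu(w)$ and completed over a full residue system by positivity, and the saving coming from $\sum_w\nu(w)^2\ll MH\log p$ --- exactly the energy bound you quote, but which never enters your moment computation, since H\"older over $(m,\ell)$ has already separated the $m$'s and no cross-dilation count $m_2x_1\equiv m_1x_2$ can arise.

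Once H\"older is set up the right way round, the genuine obstacle appears, and it is the one Burgess warned about (quoted in the paper's introduction): the inner sum becomes $\sum_\ell\chi(w+\ell)e_p(am\ell)$, whose frequency $am$ depends on $m$, so terms with the same $w=\bar m x$ but different $m$ cannot be merged into $\nu(w)$. (Note the phase that survives inside an inner $\ell$-sum is $e_p(am\ell)$, not the $e_p(amw)$ of your display, which would harmlessly factor out.) Your plan delegates this crux to ``Chamizo's device,'' described as completing in the additive variable and estimating ``Gauss-type and Kloosterman-type sums''; that is a mischaracterization, and nothing of Kloosterman type occurs. As implemented in the paper's proof of Theorem~\ref{thm:CG} (following~\cite{C}), the device is: dominate the $m$-dependent frequency by a supremum over all frequencies $b$; dominate that supremum by the average of $\bigl|\sum_{\ell}\chi(w+\ell)e_p(c\ell)\bigr|$ over roughly $p/P$ frequencies $c$ in an interval about the maximizing $b_0$, so that the frequency becomes a free summation variable decoupled from $m$; carry this variable through H\"older, extend its range to a complete residue system inside the $2r$-th moment, and let orthogonality in $c$ force the relation $\ell_1+\cdots+\ell_r=\ell_{r+1}+\cdots+\ell_{2r}$, which removes the additive character entirely and leaves pure complete sums for Weil --- this is also what costs the exponent, degrading $(r+1)/(4r^2)$ to $1/(4(r-1))$. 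Without this step, or Burgess's own variant of it, the proposal is not a proof: the one place where the additive character actually bites is exactly the place you leave to a black box, and the black box is described incorrectly.
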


On the other hand, parallel to the pure character sums~\eqref{eq:Burgess_a=0} in prime field
$\F_p$, there are also many works on pure character sums in general finite fields $\F_q$, $q=p^n$.
See the papers of Davenport and Lewis~\cite{DL}, Chang~\cite{Ch1} and Konyagin~\cite{Kon}. So it
is naturally to consider partial Gaussian sums in arbitrary finite fields. However, such a
generalization is quite unusually because the additive character $e_p(\cdot)$ causes additional
difficulty even in the case of prime field. Indeed Burgess himself has remarked that {\it the
argument used to obtain~\eqref{eq:Burgess_a=0} depended on the summand being multiplicative} (see
Burgess~\cite[p. 589]{B88}). Thus the method used by Burgess does not have any natural extensions
to the case of arbitrary finite fields. And even nowadays, although the results we obtain in this
paper match Burgess' results in the same range, they are not as explicit as those of Burgess.

Recently, Chamizo~\cite{C} presented a new proof of Burgess' partial Gaussian sums on the Third
Conference on Number Theory at University of Salamanca (Salamanca, July 2009). Chamizo's used
essentially the classical {\it method of amplification}\footnote{The method of amplification was
first used in number theory by Vinogradov~\cite{Vino}, then introduced by Karatsuba~\cite{K} into
the study of character sums. Now it is a classical method, see Friedlander~\cite{F}, Iwaniec and
Kowalski~\cite{IK}, Chang~\cite{Ch1}.} in the form of Iwaniec and Kowalski~\cite{IK}. He
ingeniously introduced a trick to overcome the difficulty caused by additive character.

In the present paper we generalize Burgess' partial Gaussian sums to arbitrary finite fields. Two
deep results on multiplicative energy for subsets in finite fields, which are obtained
respectively by some tools from additive combinatorics and geometry of numbers, are involved here.
We will also use Chamizo's trick.

We finally remark that Perel'muter~\cite{P} has studied partial Gaussian sums over additive
subgroup of $\F_{p^n}$. However he mainly concerned with the algebraic respects.

\section{Notation}

Throughout the paper we will use the following notations.

Let $p$ be an odd prime, $q$ an integer with $q=p^n$, and $\F_p$ the prime field. Let $\F_q$
denote the finite field with $q$ elements.

We recall that the function
$$
\Tr(z)=\sum_{i=0}^{n-1}z^{p^i}
$$
is called the {\it trace} of $z\in\F_{p^n}$ over $\F_p$.

Define $e_p(z)=\exp(2\pi iz/p)$. Then the set of functions $\psi_a(z)=e_p(\Tr(az))$,
$a\in\F_{p^n}$, form the set of additive characters of $\F_{p^n}$, with $\psi_0$ being the trivial
character.

Let $\chi$ be a nontrivial multiplicative character of $\F_{p^n}$.

Let $\{\omega_1,\ldots,\omega_n\}$ be an arbitrary basis for $\F_{p^n}$ over $\F_p$. Then the
elements of $\F_{p^n}$ have a unique representation as
\begin{equation}\label{eq:elements}
\xi=x_1\omega_1+\cdots+x_n\omega_n,\qquad 0\le x_i<p.
\end{equation}
We denote by $B$ a box in the $n$-dimensional space, defined by
\begin{equation}\label{eq:box}
N_j<x_j\le N_j+H_j,\qquad 1\le j\le n,
\end{equation}
where $N_j,H_j$ are integers satisfying $0\le N_j<N_j+H_j<p$ for all $j$.

For $A\subset\F_q$, we denote by
\begin{equation}\label{eq:multiplicetive_energy}
E(A):=|\{(x_1,x_2,x_3,x_4)\in A\times A\times A\times A: x_1x_2=x_3x_4\}|
\end{equation}
the {\it multiplicative energy} of $A$.

As usual, `$O$' and `$\ll$' denote respectively Landau and Vinogradov symbol, in which the
constants implied depend only on $n$ throughout this paper.

\section{Preliminary}

\subsection{Pure character sums in finite fields}

Davenport and Lewis~\cite{DL} proved in 1963 that

\begin{thm}\label{thm:Davenport&Lewis}
Let $H_j=H$ for $1\le j\le n$ with
$$
H>p^{\frac{n}{2(n+1)}+\eps}\quad\text{for some}~\eps>0,
$$
and let $p>p_1(\eps)$, then
$$
\left|\sum_{x\in B}\chi(x)\right|<(Hp^{-\delta})^n,
$$
where $\delta=\delta(\eps)>0$.
\end{thm}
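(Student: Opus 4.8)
The strategy is the classical Burgess amplification, transplanted from the interval $[N,N+H]\subset\F_p$ to the box $B\subset\F_q$, the two finite-field-specific replacements being Weil's bound for the completed moment and a geometry-of-numbers count for a multiplicative-energy term. Write $S=\sum_{\xi\in B}\chi(\xi)$. The amplification rests on two elementary moves. First, since $\chi$ is multiplicative, for every nonzero $t\in\F_q$ one has the exact identity $S=\chi(t)\sum_{\xi\in B}\chi(t^{-1}\xi)$; second, for $\beta$ ranging over a small auxiliary box $J$ one may insert an additive shift at the cost of a boundary term, the point being that two boxes differing by a small translation agree off a set of size comparable to the box boundary, namely $\ll H^{n-1}\cdot(\text{shift})$. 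I would combine these by choosing the multipliers $t$ so that the multiplicative translate $t\beta$ stays inside a controlled box (for instance taking $t$ among the prime-field scalars $1\le t\le M$ with $M$ small enough that the translated coordinates do not wrap around mod $p$), whence $\sum_{\xi\in B}\chi(\xi+t\beta)=S+O(MwH^{n-1})$ uniformly, $w$ being the side of $J$.

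Carrying this out converts $|T|\,|J|\,S$, up to the accumulated boundary error, into a bilinear sum $\sum_{v\in\F_q}\mu(v)\sum_{\beta\in J}\chi(v+\beta)$, where $v=t^{-1}\xi$ and $\mu(v)$ is a $\chi$-weighted count of the pairs $(t,\xi)$ producing $v$; in particular $|\mu(v)|\le\nu(v):=\#\{(t,\xi)\in T\times B:t^{-1}\xi=v\}$. Applying Hölder with exponent $2r$ in the standard three-factor form gives $$\Big|\sum_v\mu(v)\sum_{\beta\in J}\chi(v+\beta)\Big|\le\Big(\sum_v\nu(v)\Big)^{1-1/r}\Big(\sum_v\nu(v)^2\Big)^{1/2r}\Big(\sum_{v\in\F_q}\Big|\sum_{\beta\in J}\chi(v+\beta)\Big|^{2r}\Big)^{1/2r}.$$ Here $\sum_v\nu(v)=|T|\,|B|$ is trivial, while $\sum_v\nu(v)^2$ counts solutions of $t_2\xi_1=t_1\xi_2$ with $t_i\in T$ and $\xi_i\in B$ — a multiplicative-energy quantity for the box. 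The last factor is the completed $2r$-th moment: expanding the $2r$-fold product and invoking Weil's estimate for multiplicative character sums, every non-diagonal frequency contributes $O_r(q^{1/2})$ while the diagonal contributes the main term, giving a bound of the shape $\ll_r |J|^r q+|J|^{2r}q^{1/2}$.

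It then remains to estimate $\sum_v\nu(v)^2$ and to optimize. This energy count is the crux: bounding the number of coincidences $t_2\xi_1=t_1\xi_2$ requires understanding how a box $B$ meets its multiplicative dilates, which is precisely a lattice-point problem handled by geometry of numbers, and a bound of essentially divisor-type strength is what one needs. With that in hand, I would feed all three factors back into the inequality, recall that the bilinear sum equals $|T|\,|J|\,S$ up to the boundary error, and then balance the side $w$ of $J$, the number $M$ of multipliers, and the integer $r$. The hypothesis $H>p^{n/(2(n+1))+\eps}$ is exactly the regime in which the accumulated boundary error is dominated by the main saving and in which the optimization yields a genuine power gain $\delta=\delta(\eps)>0$; the condition $p>p_1(\eps)$ absorbs the $O_r(1)$ and logarithmic constants. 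The main obstacle I anticipate is the energy bound together with keeping the multiplicative translate $t\beta$ honestly inside a box of controlled side, since it is the interaction of the additive box structure with multiplication — benign over $\F_p$ but genuinely $n$-dimensional over $\F_q$ — that both makes the argument delicate and pins down the threshold exponent $n/(2(n+1))$.
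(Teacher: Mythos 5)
Note first that the paper does not prove Theorem~\ref{thm:Davenport&Lewis} at all: it is quoted as background from Davenport and Lewis~\cite{DL}, so your proposal has to stand on its own. Its skeleton is sound and is indeed the common skeleton of every argument in this circle (Burgess, Davenport--Lewis, Chang, Konyagin, and the theorems of this paper): the shift $\xi\mapsto\xi+t\beta$ with a boundary error $O(MwH^{n-1})$, the passage to a bilinear sum with multiplicity weight $\nu(v)$, the three-factor H\"older inequality with exponents $\bigl(1-\tfrac1r,\tfrac1{2r},\tfrac1{2r}\bigr)$, and Weil's theorem giving the completed moment $\ll_r |J|^r q+|J|^{2r}q^{1/2}$ are all correct. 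But there is a genuine gap exactly at the point you yourself flag as ``the crux'': the bound on $\sum_v\nu(v)^2$, i.e.\ on the number of coincidences $t_2\xi_1=t_1\xi_2$ with $t_i\in[1,M]$ and $\xi_i\in B$, is never stated in a precise form, let alone proved, and the optimization of $M$, $w$, $r$ that would turn such a bound into the threshold $p^{\frac{n}{2(n+1)}+\eps}$ is not carried out. This is not a deferrable verification: every other step of the sketch is exponent-neutral machinery, and the exponent $\frac{n}{2(n+1)}$ --- the entire content of the theorem --- is produced precisely by the quality of this counting lemma. Davenport and Lewis get $\frac{n}{2(n+1)}$ because their elementary count of solutions of the $n$ simultaneous congruences $t_2x_j^{(1)}\equiv t_1x_j^{(2)}\pmod p$ degrades as $n$ grows; nothing in your write-up exhibits this degradation, so nothing explains why the threshold is $\frac{n}{2(n+1)}$ rather than some other exponent.

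There is also a mismatch in the tool you invoke to fill the hole. An energy bound of ``divisor-type strength'' obtained by geometry of numbers is Konyagin's contribution (Proposition~\ref{prop:KonyaginEnergy} of this paper, $E(B)\ll|B|^2\log p$ for equal sides $H\le p^{1/2}$), and it postdates Davenport--Lewis by decades; if you actually had it, the same amplification would prove Theorem~\ref{thm:Konyagin}, i.e.\ nontrivial cancellation already for $H\ge p^{1/4+\eps}$, which is strictly stronger than the statement you were asked to prove --- a sign that the proposed lemma and the claimed threshold do not fit together. A complete proof must (i) state and prove a concrete bound for $\#\bigl\{(t_1,t_2,\xi_1,\xi_2)\in[1,M]^2\times B^2:\,t_2\xi_1=t_1\xi_2\bigr\}$ valid for arbitrary box corners $N_j$ (wrap-around modulo $p$ is a real issue here and is where the restriction on $H$ enters), and (ii) balance $M$, $w$, $r$ against the boundary error $MwH^{n-1}$ and the Weil term to extract $\delta(\eps)>0$; as written, the proposal does neither.
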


\begin{rem} We see that if $n=1$, the exponent in Theorem~\ref{thm:Davenport&Lewis} is still $1/4+\eps$,
which recovers Burgess' result. While as $n$ increases, the exponent $\frac{n}{2(n+1)}$ will be
near to $1/2$.
\end{rem}

About two years ago, M.-C.~Chang wrote a series of papers to introduce some tools from additive
combinatorics, mainly the sum-product theorems in finite fields, into the study of character sums
estimates. She obtained many interesting results, one of which improved Davenport and
Lewis~\cite{DL} by combining Burgess' classical amplification method with some estimates for
multiplicative energy for subsets in $\F_{p^n}$.

\begin{thm}\label{thm:Chang}
Let $\chi$ be a nontrivial multiplicative character of $\F_{p^n}$. Given $\eps>0$, there is
$\tau>\eps^2/4$ such that if
$$
B=\left\{\sum_{j=1}^nx_j\omega_j:x_j\in (N_j,N_j+H_j]\cap\Z,\, 1\le j\le n\right\}
$$
is a box satisfying
$$
\prod_{j=1}^n H_j>p^{(\frac25+\eps)n}
$$
then for $p>p(\eps)$,
$$
\left|\sum_{x\in B}\chi(x)\right|\ll_n |B|p^{-\tau},
$$
unless $n$ is even and $\chi\mid_{F_2}$ is principal, where $F_2$ is the subfield of size
$p^{n/2}$, in which case,
$$
\left|\sum_{x\in B}\chi(x)\right|\le\max_{\xi}|B\cap\xi F_2|+O_n(|B|p^{-\tau}).
$$
\end{thm}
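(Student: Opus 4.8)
The plan is to carry out Burgess' method of amplification, but with the elementary divisor counting that works over intervals replaced by a genuine multiplicative-energy bound for boxes. Fix an even integer $2r$ and choose an auxiliary multiplier set $A\subseteq\F_{p^n}^{*}$ together with a sub-box $B_1\subseteq B$, both taken to be boxes of side lengths $p^{\sigma}$ and $p^{\rho}$ with $\sigma,\rho$ to be optimized at the end. Using the multiplicativity $\chi(x)=\bar\chi(a)\,\chi(ax)$ together with the approximate invariance of the box sum under the shifts induced by $A$ and $B_1$, I would derive an amplified inequality of the shape
\[
\left|\sum_{x\in B}\chi(x)\right|\ll\frac{1}{|A|\,|B_1|}\sum_{t\in\F_{p^n}}\nu(t)\left|\sum_{y\in B_1}\chi(y+t)\right|,
\]
where $\nu(t)$ is the number of representations of $t$ produced by the pair $(A,B_1)$; the elements that fail to remain in $B$ under these shifts contribute an error that is controlled by the side-length choices and by $\prod_{j}H_{j}=|B|$.

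The second step is to decouple $\nu$ from the inner sum by Hölder with exponent $2r$, which bounds the right-hand side by a power of $\sum_{t}\nu(t)^{2}$ times the $2r$-th moment $\sum_{t}\left|\sum_{y\in B_1}\chi(y+t)\right|^{2r}$. Two facts now enter. First, $\sum_{t}\nu(t)^{2}$ is exactly a multiplicative-energy count for the multiplier set, so it is governed by $E(A)$. Second, after completing the incomplete inner sum to a full sum over $\F_{p^n}$ (at the cost of the usual logarithmic factors), expanding the moment and summing over $t$ splits into a diagonal part, bounded trivially by $q$ times the number of coincidences among the $y$-variables, and an off-diagonal part of complete mixed sums, which I would estimate by Weil's bound to gain the square-root cancellation $q^{1/2}$ per nontrivial sum.

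The decisive input is therefore the multiplicative-energy estimate: here I would invoke the deep bound for boxes coming from the sum-product phenomenon in $\F_{p^n}$, of the shape $E(A)\ll|A|^{3}q^{-\delta}$ with $\delta=\delta(\eps)>0$, since this is precisely what pushes the admissible range down to $\prod_{j}H_{j}>p^{(2/5+\eps)n}$; the exponent $2/5$ is exactly the threshold this energy bound affords. Tracking the saving through the optimization in $\sigma$, $\rho$ and $r$ then yields a power saving $p^{-\tau}$ with $\tau>\eps^{2}/4$. I expect the \textbf{main obstacle} to be the degenerate case in which Weil's cancellation fails, namely when $n$ is even and $\chi$ restricts to a principal character on the subfield $F_2$ of order $p^{n/2}$: there the relevant complete sums need not be small, and $\sum_{x\in B}\chi(x)$ can genuinely be large along a coset $\xi F_2$. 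In that regime I would isolate the contribution of the intersections $B\cap\xi F_2$, bound their total mass by $\max_{\xi}\left|B\cap\xi F_2\right|$, and apply the main estimate only to the complementary non-degenerate part, which accounts for the alternative conclusion in the statement.
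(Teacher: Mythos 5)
Your overall skeleton (Burgess amplification, H\"older with exponent $2r$, a multiplicative-energy bound for boxes, Weil's bound for the moment, and a separate subfield case) is the same as in the argument this paper uses: the statement is Chang's theorem, quoted from \cite{Ch1}, and the paper reproduces exactly this strategy in its proof of Theorem~\ref{thm:CG}, whose case $a=0$ is the present statement. However, two essential points are missing from your sketch, and the first breaks the proof as you have set it up.

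First, the shift-invariance step fails for your choice of auxiliary sets. You take the multiplier set $A\subseteq\F_{p^n}^{*}$ and the inner set $B_1$ both to be $n$-dimensional boxes and shift $x$ by products $ab$ with $a\in A$, $b\in B_1$. For the error $|B\setminus(B+ab)|+|(B+ab)\setminus B|$ to be small you need every product $ab$ to lie in a box whose sides are a small fraction of the $H_j$; but multiplication in $\F_{p^n}$ does not respect the coordinate structure: writing $ab$ in the basis $\omega_1,\dots,\omega_n$ involves the structure constants in $\omega_i\omega_j=\sum_l c_{ijl}\omega_l$ and reduction mod $p$, so the product of two generic boxes is spread over essentially all of $\F_{p^n}$, and the error you describe as ``controlled by the side-length choices'' is in fact of size comparable to $|B|$. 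This is precisely why the paper takes the multiplier set to be an interval of small rational integers $I=[1,p^{\delta}]\subset\F_p$ and the other auxiliary set $B_0$ to be $B$ shrunk by $p^{-2\delta}$ in each coordinate: multiplication by a rational integer $z\le p^{\delta}$ acts coordinate-wise, so $B_0I$ stays inside a box with sides $p^{-\delta}H_j$ and the shift error is $O(np^{-\delta}|B|)$. The amplified inner variable is then $z\in I$, Weil is applied to $\chi\bigl((u+z_1)\cdots(u+z_r)(u+z_{r+1})^{q-2}\cdots(u+z_{2r})^{q-2}\bigr)$, and the quantity $\sum_u\nu(u)^2$ is bounded via Cauchy--Schwarz by $E(B)^{1/2}E(B_0)^{1/2}$, both controlled by Proposition~\ref{prop:ChangEnergy}; the energy of the multiplier set, to which you attribute the decisive input, plays no role (an interval of integers has energy $\ll|I|^2\log p$ trivially by divisor counting).

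Second, the exceptional subfield case does not arise where you locate it. Proposition~\ref{prop:ChangEnergy} requires $\max_j H_j<\frac12(\sqrt p-1)$, while the hypothesis of the theorem only bounds $\prod_j H_j$ from below and permits individual sides as long as $p$. Under that side restriction the Weil step goes through for \emph{every} nontrivial $\chi$ (whether $f_{z_1,\dots,z_{2r}}$ is a $d$-th power is a matter of repeated roots, not of subfields), and the restricted case of the theorem holds with no exception whatsoever. The exceptional clause emerges only when one removes the restriction: if $H_1>p^{1/2+\eps/2}$, one sums over $x_1$ first, completes that sum and applies the Perel'muter--Shparlinski bound (Proposition~\ref{prop:PS}) on the tuples for which $x_2\frac{\omega_2}{\omega_1}+\cdots+x_n\frac{\omega_n}{\omega_1}$ generates $\F_q$, and must treat separately those tuples for which this element lies in a proper subfield. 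When $n$ is even that subfield can be $F_2$, and if moreover $\chi|_{F_2}$ is principal the corresponding piece of the sum exhibits no cancellation at all, which is exactly what produces the term $\max_{\xi}|B\cap\xi F_2|$. Your proposal never confronts boxes with long sides, so this entire part of the argument --- and with it the precise mechanism behind the exceptional case --- is absent.
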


\begin{rem}
Theorem~\ref{thm:Chang} also holds if we replace the assumption $\prod_{j=1}^n H_j>p^{(\frac25+\eps)n}$
by the stronger one
$$
H_j > p^{2/5+\eps},\qquad\text{for all}~j,
$$
which improved upon Davenport and Lewis~\cite{DL} for $n>4$. But, for higher-dimensional
generalization, the results do not achieve the strength of Burgess~\cite{B62}.

We note that Burgess' strength is obtained only for some special cases, see Burgess~\cite{B67},
Karatsuba~\cite{K} and Chang~\cite{Ch2}.
\end{rem}

The main ingredient in Chang~\cite{Ch1} is the following estimate for the multiplicative energy.

\begin{prop}\label{prop:ChangEnergy} Let $\{\omega_1,\ldots,\omega_n\}$ be a basis for
$\F_{p^n}$ over $\F_p$, and let $B\subset\F_{p^n}$ be the box
$$
B=\left\{\sum_{j=1}^nx_j\omega_j:x_j\in[N_j+1,N_j+H_j],\, j=1,\ldots,n\right\}
$$
where $1\le N_j<N_j+H_j<p$ for all $j$. Assume that
\begin{equation}\label{eq:condition}
\max_j H_j<\frac12(\sqrt{p}-1).
\end{equation}
Then we have
$$
E(B,B)<C^n(\log p)|B|^{11/4}
$$
for an absolute constant $C<2^{9/4}$.
\end{prop}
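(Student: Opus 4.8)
The plan is to turn the energy into a multiplicative convolution and then into a level-set count over dilations. Writing the defining relation $x_1x_2=x_3x_4$ as $x_1/x_3=x_4/x_2=\lambda$ — which is legitimate because every coordinate lies in $[N_j+1,N_j+H_j]$ with $N_j\ge 1$, so $0\notin B$ — one obtains $E(B,B)=\sum_{\lambda\in\F_q^*}d(\lambda)^2$, where $d(\lambda):=|\{b\in B:\lambda b\in B\}|$ counts the pairs of ratio $\lambda$. I would record the two elementary facts $\sum_{\lambda}d(\lambda)=|B|^2$ (first moment) and $d(1)=|B|$, so that the diagonal term contributes only $d(1)^2=|B|^2\le|B|^{11/4}$ and is harmless. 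Everything then rests on controlling the off-diagonal sum $\sum_{\lambda\ne 1}d(\lambda)^2$.

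The next step is to convert $d(\lambda)$ into a lattice-point count. If $b,b'\in B$ both satisfy $\lambda b,\lambda b'\in B$, then $u:=b-b'$ has every coordinate in $(-H_j,H_j)$, and so does $\lambda u$; hence $u$ lies in the symmetric set $G_\lambda:=\{u\in\F_q:u\text{ and }\lambda u\text{ both have all coordinates in }(-H_j,H_j)\}$. Since the admissible $b$ form a set all of whose differences lie in $G_\lambda$, fixing one of them gives $d(\lambda)\le|G_\lambda|$, the number of lattice points of the intersection of a box with its linear $\lambda^{-1}$-dilate. This is exactly where the hypothesis $\max_jH_j<\frac12(\sqrt p-1)$ is used: it forces $2H_j<\sqrt p$, so that the bilinear expressions occurring in the cross-multiplications stay in $(-p,p)$ and the congruences among coordinates lift to honest equalities over $\Z$; this is what keeps $G_\lambda$ an honest lattice-point count rather than a wrapped-around union.

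The heart of the matter, and the step I expect to be the main obstacle, is a geometry-of-numbers bound on how many dilations can have many fixed points, namely a level-set estimate of the shape $\#\{\lambda\in\F_q^*:d(\lambda)\ge T\}\ll_n |B|^{11/4}/T^2$ in the relevant range $T\gtrsim|B|^{3/4}$. Via Minkowski's theorems this amounts to controlling the successive minima of the graph lattices $\{(u,\lambda u)\}$: if $G_\lambda$ carried too many points, a short vector would force a low-dimensional multiplicative coincidence inside the box, which the side-length hypothesis forbids, and counting the $\lambda$ for which such short vectors exist gives the bound. Combined with the first-moment identity this yields, after a dyadic decomposition of the $\lambda$'s by the size of $d(\lambda)$, the estimate $\sum_{\lambda\ne 1}d(\lambda)^2\ll_n(\log p)\,|B|^{11/4}$; the single factor $\log p$ comes from the $O(\log p)$ dyadic scales on which the contributions are flat.

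Finally I would make the constant explicit. In the prime-field case $n=1$ the analogous level-set bound is the sharper, divisor-governed $\#\{\lambda:d(\lambda)\ge T\}\ll|B|^2/T^2$ (the reflection of $\tau(m)\ll_\eps m^\eps$), which there even gives exponent $2$ in place of $11/4$; the deterioration is precisely the weaker control of successive minima available in dimension $n>1$. Tracking the $O(1)^n$ losses incurred in passing from $B$ to its difference box and in the lattice-point counts in $\F_p^n$ then produces the stated $C^n$ with an absolute $C<2^{9/4}$.
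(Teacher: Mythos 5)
There is no ``paper's own proof'' to compare you against: the paper never proves Proposition~\ref{prop:ChangEnergy}, but imports it from Chang~\cite{Ch1}, and it explicitly attributes Chang's argument to tools from additive combinatorics, reserving the geometry-of-numbers method for Konyagin's Proposition~\ref{prop:KonyaginEnergy} (which requires equal side lengths $H_1=\cdots=H_n$ and yields the stronger bound $E(B)\ll|B|^2\log p$). Judged on its own, your proposal is a correct but routine reduction wrapped around an unproved core. The preparatory steps are fine: $E(B,B)=\sum_\lambda d(\lambda)^2$ with $d(\lambda)=|\{b\in B:\lambda b\in B\}|$, the first moment $\sum_\lambda d(\lambda)=|B|^2$ (legitimate since $0\notin B$), the difference trick $d(\lambda)\le|G_\lambda|$, and the dyadic summation that converts a level-set bound $\#\{\lambda: d(\lambda)\ge T\}\ll_n |B|^{11/4}/T^2$ (for $T\ge|B|^{3/4}$) into the stated estimate. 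But that level-set bound \emph{is} the proposition, up to bookkeeping, and you never prove it: the sentence invoking ``Minkowski's theorems,'' ``successive minima of the graph lattices,'' and a ``low-dimensional multiplicative coincidence which the side-length hypothesis forbids'' states no lemma, performs no count, and gives no indication of where the exponent $11/4$ (equivalently, the threshold $|B|^{3/4}$) would come from.

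Worse, the one concrete mechanism you do describe fails in exactly the case the proposition is about. You claim the hypothesis $\max_j H_j<\frac12(\sqrt p-1)$ is used so that ``bilinear expressions occurring in the cross-multiplications stay in $(-p,p)$'' and congruences lift to equalities over $\Z$. That is the classical $n=1$ argument: two representations $\lambda=v/u=v'/u'$ give $vu'=v'u$ with both sides of absolute value less than $p$, hence an honest integer equation and a divisor-type count. For $n\ge2$ it breaks down completely, because multiplication in $\F_{p^n}$ is not coordinatewise: if $u=\sum_i u_i\omega_i$ and $v'=\sum_j v'_j\omega_j$ have coordinates of size $O(\sqrt p)$, the coordinates of $uv'$ are $\sum_{i,j}u_iv'_jc_{ijk}$, where the structure constants $c_{ijk}\in\F_p$ of the basis are arbitrary residues; there is no size control and no lifting to $\Z$. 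The lifting survives only for dilations $\lambda\in\F_p$, which act coordinatewise; for $\lambda\notin\F_p$ --- the bulk of $\F_{p^n}^*$, and the only new case beyond the classical one --- a genuinely different idea is needed, and supplying it is the actual content of Chang's proof. Similarly, your ``graph lattices'' $\{(u,\lambda u)\}$ are $\F_p$-subspaces of $\F_p^{2n}$, and extracting uniform $T^{-2}$ decay from successive minima of the corresponding lattices intersected with a box of arbitrary eccentricity (the $H_j$ here may be wildly unequal) is precisely the obstruction that forces Konyagin to assume $H_1=\cdots=H_n$; nothing in your sketch addresses it. So the proposal is a valid reduction plus a key lemma that is not only unproved but, by the means you indicate, unprovable.
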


\begin{rem}
Using a result of Perel'muter and Shparlinski~\cite{PS} and some sophisticated arguments, Chang
removed the influence of the condition~\eqref{eq:condition} on Theorem~\ref{thm:Chang}.
\end{rem}

On the conference of 26th Journ\'ees Arithm\'etiques (Saint-Etienne, July 2009), using the method
in geometry of numbers (see~\cite{Ban}, \cite{TV}), Konyagin~\cite{Kon} improved Chang's estimate
for multiplicative energy if $H_i=H$, $1\le i\le n$.

\begin{prop}\label{prop:KonyaginEnergy}
If $H_1=\cdots=H_n\le p^{1/2}$, then
$$
E(B)\ll |B|^2\log p.
$$
\end{prop}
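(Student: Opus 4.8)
The plan is to pass from the energy to a lattice-point problem. Since $0\notin B$ (no coordinate interval $(N_j,N_j+H_j]$ contains $0$), every pair in $B\times B$ has a well-defined ratio, and writing $x_1x_2=x_3x_4$ as $x_1/x_3=x_4/x_2=\lambda$ I would first record the identity
\[
E(B)=\sum_{\lambda\in\F_q^{*}}N(\lambda)^2,\qquad N(\lambda):=\#\{y\in B:\lambda y\in B\}=|B\cap\lambda B|,
\]
together with the first–moment identity $\sum_{\lambda}N(\lambda)=|B|^2$ (each pair $(x,y)\in B\times B$ contributes to the single value $\lambda=xy^{-1}$). Fixing the basis $\{\omega_1,\dots,\omega_n\}$ and letting $M_\lambda$ denote the integer matrix of multiplication by $\lambda$ modulo $p$, the condition $x=\lambda y$ lifts to $\mathbf x\equiv M_\lambda\mathbf y\pmod p$, so that $N(\lambda)=\#\bigl(\Lambda_\lambda\cap(B\times B)\bigr)$, where $\Lambda_\lambda=\{(\mathbf x,\mathbf y)\in\Z^{2n}:\mathbf x\equiv M_\lambda\mathbf y\ (\mathrm{mod}\ p)\}$ is a lattice of determinant $p^{n}$ in $\R^{2n}$ and $B\times B$ is a box of side $H$. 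Because $H\le p^{1/2}$, the box has volume $H^{2n}\le p^{n}=\det\Lambda_\lambda$, so heuristically each $N(\lambda)$ is $O(1)$ and the total weight $|B|^2=\sum_\lambda N(\lambda)$ is spread over $\asymp|B|^2$ values of $\lambda$.

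The second step is a dyadic reduction. It suffices to prove the distributional estimate $T(m):=\#\{\lambda\in\F_q^{*}:N(\lambda)\ge m\}\ll_n |B|^2/m^{2}$, since then, splitting $\F_q^{*}$ into the ranges $2^{k}\le N(\lambda)<2^{k+1}$ and using $\sum_k 2^k\#\{\lambda:N(\lambda)\sim 2^k\}\le\sum_\lambda N(\lambda)=|B|^2$, one gets $E(B)\ll_n|B|^2\sum_{k\le\log_2\max_\lambda N(\lambda)}1\ll_n|B|^2\log p$, the number of scales being $O(\log p)$ because $N(\lambda)\le|B|\le p^{n/2}$. To obtain the bound on $T(m)$ I would use the geometry of numbers: if $N(\lambda)\ge m$ then $B\times B$ contains $\ge m$ points of $\Lambda_\lambda$, and by Minkowski's second theorem together with the standard count $\#(\Lambda_\lambda\cap rK)\ll_n\prod_i(1+r/\mu_i)$ in terms of the successive minima $\mu_i$ (see~\cite{Ban},~\cite{TV}), a large value of $N(\lambda)$ forces several $\mu_i$ to be small. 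A short vector $(\mathbf u,\mathbf v)\in\Lambda_\lambda$ is exactly a pair $u,v\in B-B$ of field elements with all coordinates in $(-H,H)$ and $u=\lambda v$, whence $\lambda=u/v$ is \emph{determined} by the short vector. Counting, for each dyadic size of $\mu_1$, how many $\lambda$ can carry a short vector of that size and with what multiplicity, converts the second moment into a divisor–type sum $\sum_d 1/d\asymp\log p$; this is the exact analogue, for a cube of side $H\le\sqrt p$, of the classical fact that in the case $n=1$ the products do not wrap around modulo $p$, so $E(B)$ coincides with $\#\{ab=cd:a,b,c,d\le H\}\asymp H^2\log H$.

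The hard part will be the geometry-of-numbers count in the second step, namely making the passage from ``$N(\lambda)$ large'' to ``$\lambda$ lies in a small, explicitly countable set'' quantitatively sharp enough to yield the exponent $2$ in $T(m)\ll_n|B|^2/m^2$, uniformly in $\lambda$ and in the degree $n$. Here one must exploit that two independent short vectors of $\Lambda_\lambda$ force a genuine multiplicative coincidence $u_1v_2=u_2v_1$ among small elements, which is far more restrictive than choosing the vectors independently; thus the naive product of successive minima over-counts, and the correct estimate requires Minkowski's second theorem balanced against the normalization $\det\Lambda_\lambda=p^{n}$. A secondary technical point is the boundary case where $H$ is within a constant factor of $p^{1/2}$, in which a product of two elements of $B$ may exceed $p$: there the clean ``no wraparound'' picture of the $n=1$ divisor problem degrades and must be replaced throughout by the lattice formulation above, the resulting constants being absorbed into the implied constant $\ll_n$.
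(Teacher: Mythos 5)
First, note that the paper does not prove Proposition~\ref{prop:KonyaginEnergy} at all: it is quoted from Konyagin~\cite{Kon}, with only a pointer to the geometry-of-numbers literature (\cite{Ban}, \cite{TV}), so your attempt can only be measured against that intended method — and your setup is indeed aimed in the same direction. The parts you actually carry out are correct: since $0\notin B$ one has $E(B)=\sum_{\lambda\in\F_q^*}N(\lambda)^2$ with $N(\lambda)=|B\cap\lambda B|$, the identification of $N(\lambda)$ with the number of points of the determinant-$p^n$ lattice $\Lambda_\lambda$ in a box is right, and the dyadic reduction correctly shows that everything would follow from the distributional bound $T(m)\ll_n |B|^2/m^2$, with $O_n(\log p)$ scales.

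However, that distributional bound \emph{is} the proposition — all of the difficulty is concentrated in it — and you do not prove it; you explicitly defer it as ``the hard part''. Worse, the route you sketch does not obviously close. The injection ``nonzero short vector $(u,v)\mapsto\lambda=u/v$'' that you describe yields only a first-moment bound: each $\lambda$ with $N(\lambda)\ge m$ owns at least $m-1$ nonzero points of $\Lambda_\lambda$ with coordinates in $(-H,H)$, distinct $\lambda$'s own disjoint sets of such points, and there are only $\ll_n H^{2n}\ll_n|B|^2$ points available, whence $T(m)\ll_n|B|^2/m$; after dyadic summation this gives only $E(B)\ll_n |B|^2\max_\lambda N(\lambda)$, which is far too weak. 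The exponent $2$ comes for free when $n=1$: there $N(\lambda)\ge m$ forces a single vector of norm $\ll H/m$, because $\mu_1\mu_2\gg p\ge H^2$ caps the second factor in the point count, so the candidate short vectors live in a box of volume $(H/m)^2$ and counting them gives $T(m)\ll|B|^2/m^2$ directly. For $n\ge 2$ this mechanism fails: $N(\lambda)$ can be large because several successive minima are moderately small, with no single very short vector, and counting $\lambda$'s by their shortest vector again loses the needed factor of $m$. Your proposed remedy — exploiting that two independent short vectors satisfy $u_1v_2=u_2v_1$ — is circular as stated: counting solutions of $u_1v_2=u_2v_1$ with all four entries in boxes of side $\asymp H$ is precisely a multiplicative-energy estimate of the kind being proved. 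Supplying a non-circular count here (this is the actual content of Konyagin's lemma) is the missing idea; as it stands, the proposal is a correct reformulation plus an unproven claim, not a proof.
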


Then, incorporating the estimate of Proposition~\ref{prop:KonyaginEnergy} into Burgess'
amplification process, Konyagin proved

\begin{thm}\label{thm:Konyagin}
Let $\chi$ be a nontrivial multiplicative character of $\F_{p^n}$ and $0<\eps\le 1/4$ be given. If
$n\ge 2$ and $B$ is a box defined in~\eqref{eq:box} and satisfying
$$
H_j\ge p^{1/4+\eps},\qquad 1\le j\le n,
$$
then
$$
\left|\sum_{x\in B}\chi(x)\right|\ll |B|p^{-\eps^2/2}.
$$
\end{thm}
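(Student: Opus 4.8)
The plan is to run Burgess' amplification method in the finite-field setting, feeding in Konyagin's multiplicative-energy bound (Proposition~\ref{prop:KonyaginEnergy}) at the decisive step; since the summand here is purely multiplicative, no analogue of Chamizo's trick for the additive character is needed. Write $S=\sum_{x\in B}\chi(x)$. First I would fix an amplifier: because every side satisfies $H_j\ge p^{1/4+\eps}$ and $\eps\le 1/4$, one can select a cube $A$ of common side length $h$ with $p^{1/4+\eps}\le h\le p^{1/2}$ sitting inside (a translate of) $B$. The upper bound $h\le p^{1/2}$ is imposed precisely so that Proposition~\ref{prop:KonyaginEnergy} applies to $A$, giving $E(A)\ll |A|^2\log p$.

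The amplification itself combines two facts. On one hand, multiplicativity gives $\chi(a)S=\sum_{x\in B}\chi(ax)$ for every $a\in\F_q^*$, so $|S|=\bigl|\sum_{x\in B}\chi(ax)\bigr|$ for each $a\in A$. On the other hand, translating the box $B$ by a vector small relative to its side lengths changes $S$ only by a boundary error of size $\ll |B|\,h/\min_j H_j$. Averaging the identity over $a\in A$ and absorbing the shifts, I would arrive at an inequality of the shape
\[
|A|\,|S|\ \le\ \sum_{y}\Bigl|\sum_{a\in A}\chi(y+a)\Bigr|\ +\ \text{(boundary error)},
\]
where $y$ runs over a set comparable to $B$; after the substitution $x+a=x(1+a/x)$ this is equivalently a sum over the inverse set $B^{-1}$ of $\bigl|\sum_{a\in A}\chi(1+ay)\bigr|$. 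The boundary error is acceptable exactly because $h$ is much smaller than each $H_j$.

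Next I would raise this to the power $2k$ by Hölder's inequality and expand the resulting $2k$-th moment $\sum_y\bigl|\sum_{a\in A}\chi(1+ay)\bigr|^{2k}$ into a sum of (nearly) complete character sums indexed by $2k$-tuples from $A$. The tuples split into two families. For non-degenerate tuples the associated rational function is not a perfect power, and Weil's bound controls each inner sum by $\ll k\,q^{1/2}$. The degenerate tuples form the main term, and their number is governed by a multiplicative energy; here Proposition~\ref{prop:KonyaginEnergy} replaces the trivial diagonal count by the sharp bound $\ll |A|^2\log p$, and this is what renders the estimate nontrivial throughout the range $\prod_j H_j\ge p^{n(1/4+\eps)}$, where the naive Weil contribution alone would exceed $|B|$.

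Collecting the two contributions yields a bound of the form $|S|^{2k}\ll |B|^{2k}\bigl(\text{energy term}+\text{Weil term}\bigr)$, and I would finish by choosing the integer $k$ of size about $1/(2\eps)$ to balance the two terms and extract the saving $p^{-\eps^2/2}$. I expect the main obstacle to be the amplification step: one must choose $h$ large enough that the translation and boundary errors are genuinely negligible against the saving, yet small enough ($h\le p^{1/2}$) that the energy bound of Proposition~\ref{prop:KonyaginEnergy} is available, while simultaneously verifying the non-degeneracy needed for Weil and confirming that the degenerate count is exactly the multiplicative energy. Carrying the dependence on $\eps$ through the optimization of $k$ so that it lands on the clean exponent $\eps^2/2$ is the other delicate bookkeeping point.
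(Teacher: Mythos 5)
There is a genuine structural gap in your amplification step, and it is one that no choice of parameters can repair. You shift by a one-parameter family $x\mapsto x+a$, $a\in A$, where $A$ is a cube of side $h\ge p^{1/4+\eps}$. The proof that the paper follows (Konyagin's, reproduced in the proofs of Theorems~\ref{thm:CG} and~\ref{thm:KG}) uses instead the \emph{bilinear} family $x\mapsto x+yz$, with $y\in B_0$ a shrunken copy of $B$ (sides $p^{-2\delta}H_j$) and $z\in I=[1,p^{\delta}]$ an interval of scalars. This difference is fatal twice over. First, your boundary error $\ll |B|\,h/\min_j H_j$ is not small: in the critical case $\min_j H_j\asymp p^{1/4+\eps}$ while you require $h\ge p^{1/4+\eps}$, so the error is of the same order as the main term; in the bilinear scheme the shifts $yz$ have coordinates $\le p^{-\delta}H_j$, yet the number of shift pairs $|B_0|\,|I|$ stays large. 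Second, and more fundamentally, your scheme is the degenerate case $B_0=\{1\}$ of the real one: the outer weight in the H\"older step is then the indicator of $B^{-1}$, with total mass $V_1=|B|$. After completing the $2k$-th moment to all of $\F_q$, the Weil (non-degenerate) contribution feeds back as $\asymp |B|\,(q^{1/2}/|B|)^{1/2k}$, which \emph{exceeds} $|B|$ because $|B|$ can be as small as $p^{n(1/4+\eps)}<q^{1/2}$ when $\eps<1/4$. The entire point of the bilinear structure is that the weight becomes $\nu(u)=|\{(x,y)\in B\times B_0:\,x=uy\}|$ with mass $V_1=|B|\,|B_0|\asymp p^{n(1/2+2\eps-2\delta)}>q^{1/2}$, and it is this doubled mass, not $|B|$ alone, that turns the Weil term into a saving $(q^{1/2}/V_1)^{1/2r}$.

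The same flaw shows up in where you invoke Proposition~\ref{prop:KonyaginEnergy}. The degenerate tuples in $\sum_{y\in\F_q}\bigl|\sum_{a\in A}\chi(1+ay)\bigr|^{2k}$ are just the multiset-diagonal tuples; their count is $\asymp k^{2k}|A|^{k}$ and has nothing to do with the multiplicative energy $E(A)$. In the correct argument the energy enters elsewhere: via Cauchy--Schwarz, $V_2=\sum_u\nu(u)^2\le E(B)^{1/2}E(B_0)^{1/2}\ll |B|\,|B_0|\log p$, i.e.\ Proposition~\ref{prop:KonyaginEnergy} controls the multiplicity of the quotient map $(x,y)\mapsto x/y$ on $B\times B_0$. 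This is also why the hypothesis $\eps\le 1/4$ and the side bound $p^{1/2}$ matter: one first subdivides $B$ into near-cubes with $H_j\asymp p^{1/4+\eps}\le p^{1/2}$ so that the energy bound applies to $B$ itself and to $B_0$ --- not to an auxiliary amplifier cube. Since your scheme has no quotient map and no weight $\nu$, the energy bound has nowhere to act, which is precisely the symptom of the lost bilinear structure. A smaller point: the final optimization in $\F_{p^n}$ requires $r$ of size roughly $n/\eps$ (with $\delta\approx\eps/2$) to beat the diagonal term $q/(|B|\,|B_0|\,|I|^{r})$ and land on the saving $p^{-\eps^2/2}$; a choice $k\approx 1/(2\eps)$ independent of $n$, as you propose, is too small.
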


\subsection{Weil's theorem}

We will need the following version of Weil's bound on exponential sums. See~\cite[Theorem
11.23]{IK}.

\begin{thm}[A. Weil]\label{thm:Weil} Let $\chi$ be a nontrivial multiplicative character of $\F_{p^n}$ of order $d>1$.
Suppose $f\in\F_{p^n}[x]$ has $m$ distinct roots and $f$ is not a $d$-th power. Then for $n\ge 1$
we have
$$
\left|\sum_{x\in\F_{p^n}}\chi(f(x))\right| \le (m-1)p^{\frac{n}{2}}.
$$
\end{thm}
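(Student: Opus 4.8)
The plan is to prove this via the theory of $L$-functions of multiplicative characters over the rational function field $\F_q(x)$, where $q=p^n$, invoking the Riemann Hypothesis for curves over finite fields as the essential input. The key observation is that the single exponential sum can be unpacked into a generating series whose analytic behaviour is governed by a smooth projective curve, after which the desired bound falls out from the distribution of the zeros of that series.

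First I would lift $\chi$ along the norm maps: for each $k\ge 1$ set $\chi_k=\chi\circ N_{\F_{q^k}/\F_q}$ and $S_k=\sum_{x\in\F_{q^k}}\chi_k(f(x))$, so that $S_1$ is the sum to be bounded. Assembling these into the $L$-function
\begin{equation*}
L(T)=\exp\(\sum_{k\ge 1}S_k\frac{T^k}{k}\),
\end{equation*}
the whole problem reduces to (i) showing that $L(T)$ is a polynomial and determining its degree, and (ii) locating its reciprocal roots. For step (i) I would interpret $S_k$ geometrically: by orthogonality over the group $\mu_d$ of $d$-th roots of unity, $\chi(f(x))$ appears as an eigencomponent of the point count on the Kummer cover $y^d=f(x)$, so $L(T)$ is a factor of the zeta function of that curve. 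The hypothesis that $f$ is \emph{not} a $d$-th power is precisely what forces the cover to be genuinely ramified, hence makes the associated $L$-function nontrivial and free of the ``trivial'' reciprocal root $q$; rationality then follows, and the degree is read off from the conductor: each of the $m$ distinct roots of $f$, together with a possible contribution at infinity, carries ramification, and a Riemann--Hurwitz / Euler-characteristic count bounds $\deg L$ by $m-1$.

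The decisive step (ii) is to apply the Riemann Hypothesis for curves over finite fields, which asserts that every reciprocal root $\alpha_i$ of $L(T)$ satisfies $|\alpha_i|=q^{1/2}=p^{n/2}$. Writing $L(T)=\prod_i(1-\alpha_i T)$ and comparing the $T^1$-coefficient of $\log L(T)$ recovers $S_1=-\sum_i\alpha_i$; combining this identity with the degree bound gives
\begin{equation*}
\left|\sum_{x\in\F_{p^n}}\chi(f(x))\right|=\Bigl|\sum_i\alpha_i\Bigr|\le(m-1)\,p^{n/2},
\end{equation*}
as claimed. I expect the genuine obstacle to be the Riemann Hypothesis itself: the estimate $|\alpha_i|=q^{1/2}$ is the full strength of Weil's Riemann Hypothesis for curves and cannot be reached by elementary manipulation, whereas the degree bookkeeping via ramification is delicate but routine by comparison. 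An alternative that sidesteps cohomology entirely is the Stepanov--Bombieri method, which yields the same bound by constructing an auxiliary polynomial of controlled degree that vanishes to high order at the relevant points; this is elementary but computationally heavy, and I would treat it as a fallback rather than the primary plan.
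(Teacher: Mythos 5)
The first thing to note is that the paper does not prove Theorem~\ref{thm:Weil} at all: it is quoted as a known classical result with the pointer to \cite[Theorem 11.23]{IK} and then used as a black box in the amplification argument, so the only meaningful comparison is with the proof on record in the cited literature. Your outline is the standard function-field proof and is correct in its essentials: with $q=p^n$, the series $L(T)=\exp\bigl(\sum_{k\ge1}S_kT^k/k\bigr)$ is the $L$-function of the rank-one Kummer object attached to $\chi(f)$ on the affine line with the $m$ roots of $f$ removed; the hypothesis that $f$ is not a $d$-th power (understood over $\overline{\F}_q$, i.e.\ $f\ne c\,g^d$) is exactly the geometric nontriviality that removes the trivial factor coming from $H^2_c$, and the tame Euler-characteristic (conductor) count then makes $L$ a polynomial of degree exactly $m-1$; finally $S_1=-\sum_i\alpha_i$ together with the bound on the $|\alpha_i|$ gives the estimate. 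Incidentally, the reference the paper leans on proves the theorem by precisely the route you relegate to a fallback: Chapter~11 of \cite{IK} develops the Stepanov--Bombieri method and obtains Theorem~11.23 from it, with no recourse to cohomological machinery, so your elementary alternative is not a second-class option but the proof actually backing the paper's citation.

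One inaccuracy you should repair: it is not true that every reciprocal root of $L(T)$ satisfies $|\alpha_i|=q^{1/2}$. If some root of $f$ has multiplicity divisible by $d$, the associated character is unramified at that point and $L(T)$ acquires roots of modulus $1$. Concretely, for $f(x)=(x-a)^d(x-b)$ with $a\ne b$ one has $m=2$ and $S_k=-\chi(a-b)^k$, hence $L(T)=1-\chi(a-b)T$, whose unique reciprocal root has modulus $1$, not $q^{1/2}$. The correct input from the Riemann Hypothesis for curves is $|\alpha_i|\le q^{1/2}$ (the modulus-one roots coming from unramified places excluded from the sum, the rest being pure of weight one), and since your final display only uses this inequality, the slip is cosmetic rather than structural --- but as stated the claim that $|\alpha_i|=q^{1/2}$ for all $i$ is false.
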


\section{Main results}

The following two theorems generalize Theorem~\ref{thm:Chang} and Theorem~\ref{thm:Konyagin}
respectively.

\begin{thm}\label{thm:CG}
Let $\chi$ be a nontrivial multiplicative character of $\F_{p^n}$. Given $\eps>0$, there is
$\tau>\eps^2/4$ such that if $B$ is a box defined in~\eqref{eq:box} and satisfying
$$
\prod_{j=1}^n H_j \ge p^{\(\frac25+\eps\)n},
$$
then for $p>p(\eps)$,
$$
\left|\sum_{x\in B}\chi(x)e_p(\Tr(ax))\right|\ll |B|p^{-\tau},
$$
unless $n$ is even and $\chi|_{F_2}$ is principal, where $F_2$ is the subfield of size $p^{n/2}$,
in which case,
$$
\left|\sum_{x\in B}\chi(x)e_p(\Tr(ax))\right|\le\max_{\xi}|B\cap\xi F_2| + O_n(p^{-\tau}|B|).
$$
\end{thm}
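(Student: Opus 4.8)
The plan is to mimic the proof of Theorem~\ref{thm:Chang}, which rests on Burgess' amplification method together with the multiplicative-energy bound of Proposition~\ref{prop:ChangEnergy}, while inserting Chamizo's trick to handle the extra factor $e_p(\Tr(ax))$. The central difficulty, as Burgess himself noted, is that the summand $\chi(x)e_p(\Tr(ax))$ is no longer purely multiplicative, so the usual shift-and-amplify step cannot be applied verbatim: shifting $x\mapsto xy$ inside the character $\chi$ interacts cleanly, but the additive piece becomes $e_p(\Tr(axy))$, which does not factor. I expect this to be the main obstacle, and Chamizo's device is precisely what is designed to neutralize it.

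First I would set up the amplification. Writing $S=\sum_{x\in B}\chi(x)e_p(\Tr(ax))$, one exploits that for a suitable auxiliary set of multipliers $y$ (products of elements drawn from short intervals, or from a box), the shifted box $B$ and $yB$ overlap substantially, so that $|S|$ is comparable to an average of $\sum_{x}\chi(xy)e_p(\Tr(axy))$ over many $y$. The multiplicative character splits as $\chi(xy)=\chi(x)\chi(y)$, and here is where Chamizo's trick enters: to cope with the additive term $e_p(\Tr(axy))$, one introduces a smooth (or sharp) weight and a completion/Fourier step in the additive variable, so that the dependence on $a$ is absorbed into an average over additive characters and one is left, after the dust settles, with sums to which Weil's theorem (Theorem~\ref{thm:Weil}) applies. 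Concretely, I would follow Chamizo by decomposing the additive character against the box structure so that the troublesome phase is controlled by its own $L^1$ or $L^2$ mass, at the cost of only a logarithmic or $\eps$-type loss, thereby reducing the mixed sum to a pure character sum of essentially the same shape.

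Next I would carry out the Hölder step of the Burgess method. Raising the amplified average to the $2k$-th power and expanding, one separates the diagonal from the off-diagonal contributions; the off-diagonal terms are estimated by Weil's bound applied to the polynomial $f(x)=\prod (x+\text{shifts})$, which is not a perfect $d$-th power for generic shifts, while the count of degenerate configurations is exactly governed by the multiplicative energy $E(B)$. This is where Proposition~\ref{prop:ChangEnergy} supplies the crucial input $E(B)\ll C^n(\log p)\,|B|^{11/4}$, and the hypothesis $\prod_{j=1}^n H_j\ge p^{(2/5+\eps)n}$ is precisely the threshold at which the energy gain beats the Weil error, yielding the saving $p^{-\tau}$ with $\tau>\eps^2/4$. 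The exponent $2/5$ is the direct signature of the $11/4$ in the energy bound.

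Finally I would address the exceptional case. When $n$ is even and $\chi|_{F_2}$ is principal on the subfield $F_2$ of size $p^{n/2}$, the character $\chi$ degenerates along cosets $\xi F_2$, and Weil's theorem no longer forces cancellation on those cosets; the sum over $B\cap \xi F_2$ can be as large as $\max_\xi|B\cap\xi F_2|$, which must be extracted as a main term before the amplification is applied to the remaining (genuinely oscillating) part. I would handle this exactly as in Theorem~\ref{thm:Chang}, splitting off the $F_2$-periodic contribution and running the argument above on the complement. Throughout, the additive-character complication is confined to the Chamizo reduction at the start; once that reduction is in place, the remainder is parallel to the pure-sum case and the stated bounds follow with the same $p(\eps)$ and $\tau>\eps^2/4$.
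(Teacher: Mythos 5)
Your outline of the core case does match the first half of the paper's proof: Burgess amplification, H\"older's inequality, the energy bound of Proposition~\ref{prop:ChangEnergy} controlling the second moment $\sum_u\nu(u)^2$ via Cauchy--Schwarz, Weil's theorem for the non-degenerate tuples, and your reading of the exponent $2/5$ as the signature of $|B|^{11/4}$ is correct. Two corrections there. First, the shifts are additive, $x\mapsto x+yz$ with $y$ in a shrunken box $B_0$ and $z\in I=[1,p^{\delta}]$ (so $B$ overlaps $B+yz$; $B$ and $yB$ do not overlap, as your write-up has it). Second, Chamizo's trick is more specific than a generic ``completion/Fourier step controlled by $L^1$ or $L^2$ mass'': after substituting $u=x/y$, the additive parameter becomes $ay$, one takes a supremum over this parameter $b$, bounds that supremum by an average of the same quantity over $c$ ranging in a box $B_I$ of cardinality $q/|I|$, and then, inside the $2r$-th moment, extends this average to the complete sum over $c\in\F_q$; orthogonality of the additive characters then \emph{forces the linear constraint} $z_1+\cdots+z_r=z_{r+1}+\cdots+z_{2r}$ on the shift variables, and only the resulting pure sums $\sum_u\chi(f_{z_1,\ldots,z_{2r}}(u))$ are fed to Weil. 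The additive character is not merely absorbed at a small loss; it is converted into a counting constraint that the argument needs.

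The genuine gap is that Proposition~\ref{prop:ChangEnergy} requires $\max_j H_j<\frac12(\sqrt{p}-1)$, whereas the theorem constrains only the product $\prod_j H_j$, so a single side may be nearly as long as $p$. Your proposal never confronts this, and the entire second half of the paper's proof is devoted to it. After reducing (by subdividing boxes) to the case $H_1>p^{1/2+\eps/2}$, the amplification/energy machinery is abandoned: the sum over the long variable $x_1$ is completed and estimated by the Perel'muter--Shparlinski bound (Proposition~\ref{prop:PS}), which is crucially a bound for \emph{complete mixed} sums $\sum_{t\in\F_p}\chi(g+t)e_p(at)$ and applies when $x_2\frac{\omega_2}{\omega_1}+\cdots+x_n\frac{\omega_n}{\omega_1}$ generates $\F_q$; the remaining tuples, those landing in proper subfields, are handled by a dimension-counting argument, with separate cases for $n$ odd and $n$ even. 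Moreover, the exceptional case of the theorem arises exactly here, not in the Weil step as you suggest: when $n$ is even and $\chi|_{F_2}$ is principal, the tuples lying in $F_2$ (the set $D_2$) produce the non-cancelling main term $|\omega_1F_2\cap B|$, while if $\chi|_{F_2}$ is nonprincipal they are killed by completion and the Gauss-sum bound over $F_2$, again in the presence of the additive character. Deferring all of this to ``exactly as in Theorem~\ref{thm:Chang}'' is circular for this paper: Chang's removal of the side-length restriction is an argument for pure sums, and adapting it to mixed sums is precisely where the difficulty reappears and where Proposition~\ref{prop:PS} must be invoked.
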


\begin{thm}\label{thm:KG}
Let $\chi$ be a nontrivial multiplicative character of $\F_{p^n}$ and $0<\eps\le 1/4$ be given. If
$n\ge 2$ and $B$ is a box defined in~\eqref{eq:box} and satisfying
$$
H_j\ge p^{1/4+\eps},\qquad 1\le j\le n,
$$
then
$$
\left|\sum_{x\in B}\chi(x)e_p(\Tr(ax))\right|\ll |B|p^{-\eps^2/2}.
$$
\end{thm}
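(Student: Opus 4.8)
The plan is to combine the pure-character-sum bound of Theorem~\ref{thm:Konyagin} with Chamizo's trick for removing the additive character $e_p(\Tr(ax))$. The key observation is that the additive character is ``slowly varying'' on short progressions, so I would first partition the box $B$ into many translates of a smaller sub-box $B'$ and exploit the fact that on each translate the additive character is nearly constant up to a controllable error. Concretely, for a shift parameter $h$ ranging over a short interval, I would write $\sum_{x\in B}\chi(x)e_p(\Tr(ax))$ and compare it to the shifted sum $\sum_{x\in B}\chi(x+h)e_p(\Tr(a(x+h)))=e_p(\Tr(ah))\sum_{x\in B+h}\chi(x)e_p(\Tr(ax))$. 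Since $B$ and $B+h$ differ only in a boundary region of relative size $O(\sum_j h_j/H_j)$, averaging over a suitable set of shifts $h$ lets me pull the factor $e_p(\Tr(ah))$ outside while only losing a negligible boundary term.

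The heart of Chamizo's trick is that after this averaging, the additive character appears as a pure additive-character sum $\sum_{h}e_p(\Tr(ah))$ over a box, which by orthogonality (geometric-series summation in each coordinate) is bounded by $\prod_j \min(h_j, \|\Tr(a\omega_j)/p\|^{-1})$, independent of the multiplicative structure. The strategy is to choose the averaging box large enough that this additive factor, when divided by the number of shifts, produces genuine cancellation or at worst stays bounded, yet small enough that the boundary loss remains under control. This decouples the additive character from the multiplicative character, reducing the mixed sum essentially to the pure sum $\sum_{x\in B'}\chi(x)$ on a slightly shrunken box, to which Theorem~\ref{thm:Konyagin} (and its underlying multiplicative-energy input Proposition~\ref{prop:KonyaginEnergy}) applies directly.

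After this reduction I would run the amplification argument: raise the sum to the $2r$-th power (or equivalently bound it via the multiplicative energy $E(B')$), apply Proposition~\ref{prop:KonyaginEnergy} to control $E(B')\ll|B'|^2\log p$, and invoke Theorem~\ref{thm:Weil} to handle the diagonal contribution from the resulting complete character sums. Because $H_j\ge p^{1/4+\eps}$ for every $j$, the shrunken box $B'$ still satisfies the hypotheses of Theorem~\ref{thm:Konyagin} after removing a thin shell, so the final savings of $p^{-\eps^2/2}$ survives. The bookkeeping must ensure that the boundary error from the shift-averaging, the additive-character factor, and the Weil diagonal term are each dominated by $|B|p^{-\eps^2/2}$.

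The hardest part will be calibrating the shift box so that the additive character is genuinely neutralized without degrading the exponent. If $\Tr(a\omega_j)$ happens to be small modulo $p$ for some $j$, the additive character varies slowly and averaging is essentially free; but if it is large, one must instead exploit cancellation in $\sum_h e_p(\Tr(ah))$, and balancing these two regimes uniformly in $a$ is the delicate point. This is precisely the difficulty Burgess flagged when he noted that his original method ``depended on the summand being multiplicative''; Chamizo's insight is that the additive character can be absorbed into the shift-averaging as an honest boundary-plus-orthogonality estimate rather than being carried through the amplification. Getting the dependence on $n$ right, so that the implied constant depends only on $n$ and the exponent $\eps^2/2$ is preserved, will require careful attention throughout.
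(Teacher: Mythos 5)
There is a genuine gap at the heart of your argument: the up-front ``decoupling'' of the additive character by translation-averaging, followed by a black-box application of Theorem~\ref{thm:Konyagin}, does not work, and it is not what Chamizo's trick does. A pure translation never touches the $x$-dependence of the additive character, since $e_p(\Tr(a(x+h)))=e_p(\Tr(ax))\,e_p(\Tr(ah))$; after your averaging, the inner sums $\sum_{x\in B}\chi(x+h)e_p(\Tr(ax))$ still carry the twist $e_p(\Tr(ax))$, so no reduction to a pure sum has occurred. The only usable inequality your scheme produces is
$$
|S|\cdot\Big|\sum_{h}e_p(\Tr(ah))\Big|\ \le\ \sum_{x\in B}\Big|\sum_{h}\chi(x+h)\Big|+O\big(\#\{h\}\cdot\text{boundary}\big),
\qquad S=\sum_{x\in B}\chi(x)e_p(\Tr(ax)),
$$
and this is useful only when $|\sum_h e_p(\Tr(ah))|$ is of order $\#\{h\}$, i.e.\ when all $\Tr(a\omega_j)$ are essentially small. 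For generic $a$ that factor is tiny, the inequality is vacuous, and your proposed remedy for this regime --- ``exploit cancellation in $\sum_h e_p(\Tr(ah))$'' --- is backwards: cancellation in that factor destroys the estimate rather than supplying one. The natural repair, substituting $x\mapsto\lambda x$ to make the $\Tr(a\lambda\omega_j)$ small, replaces $B$ by $\lambda^{-1}B$, which is no longer a box, so Theorem~\ref{thm:Konyagin} no longer applies. This is exactly Burgess's obstruction quoted in the introduction, and it is why the mixed sum cannot be reduced to the pure one and the pure-sum theorem cannot be cited as a black box.

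What the paper does (and what you would need to do) is rerun the entire amplification for the mixed sum, carrying the additive character along until the very last step. After normalizing $H_1\asymp\cdots\asymp H_n\asymp p^{1/4+\eps}$ by splitting $B$ into sub-boxes (this part of your proposal agrees with the paper), one uses Burgess shifts $x\mapsto x+yz$, $y\in B_0$, $z\in I$, so that $\chi(x+yz)=\chi(y)\chi(u+z)$ with $u=x/y$, while the additive factor becomes $e_p(\Tr(b(u+z)))$ with an unknown parameter $b=ay$ that varies with $y$. Chamizo's trick is the majorization of $\sup_b\big|\sum_{z\in I}\chi(u+z)e_p(\Tr(b(u+z)))\big|$ by an average of the same sums over $c$ ranging in a translate of the box $B_I$; then, after H\"older --- with the energy term $V_2$ controlled by Proposition~\ref{prop:KonyaginEnergy}, which is where Konyagin's input actually enters, as an energy bound rather than as Theorem~\ref{thm:Konyagin} --- the $c$-average inside the $2r$-th moment $W$ is extended by positivity to all of $\F_q$, and orthogonality of additive characters annihilates the additive twist, replacing it by the constraint $z_1+\cdots+z_r=z_{r+1}+\cdots+z_{2r}$ on the shift variables. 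Theorem~\ref{thm:Weil} then bounds the nondegenerate complete multiplicative sums (note Weil handles the off-diagonal tuples; the degenerate ones are counted trivially, the reverse of what you wrote). So the additive character is eliminated at the end, inside the moment computation, not at the beginning; without the sup-to-average step and the orthogonality-inside-the-moment step, the varying twist $e_p(\Tr(ay(u+z)))$ blocks the Weil bound, and your remaining ingredients, though correctly identified, cannot be assembled.
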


\section{Proof of Theorem~\ref{thm:CG}}

We incorporate the technique of Chamizo~\cite{C} into the argument
of Chang~\cite{Ch1}.

\begin{proof}[Proof of Theorem~\ref{thm:CG}] We first prove the theorem under the restriction
\begin{equation}\label{eq:H_j}
H_j<\frac12(\sqrt{p}-1)\qquad\text{for all}~j,
\end{equation}
which is inherited from the estimate for multiplicative energy in Proposition~\ref{prop:ChangEnergy}.

By breaking up $B$ in smaller boxes, we may assume
\begin{equation}\label{eq:Chang_H}
\prod_{j=1}^n H_j\sim p^{(\frac25+\eps)n}.
\end{equation}

Let $\delta>0$ be specified later. Let
$$
I=[1,p^{\delta}],
$$
$$
B_0=\left\{\sum_{j=1}^n x_j\omega_j : x_j\in[0,p^{-2\delta}H_j],\,j=1,\ldots,n \right\}
$$
and
$$
B_I=\left\{\sum_{j=1}^n x_j\omega_j : x_j\in[0, p/|I|^{\frac{1}{n}}],\, j=1,\ldots,n \right\}.
$$

Since $B_0 I\subset\left\{\sum_{j=1}^n x_j\omega_j : x_j\in[0, p^{-\delta}H_j],\, j=1,\ldots,n
\right\}$, clearly
\begin{eqnarray*}
\lefteqn{\left|\sum_{x\in B}\chi(x)e_p(\Tr(ax)) - \sum_{x\in B}\chi(x+yz)e_p\(\Tr(a(x+yz))\)\right|} \\
& & \quad < |B\setminus(B+yz)| + |(B+yz)\setminus B| < 2np^{-\delta}|B|
\end{eqnarray*}
for $y\in B_0$, $z\in I$. Hence
\begin{eqnarray*}
\lefteqn{\sum_{x\in B}\chi(x)e_p(\Tr(ax))} \\
& & \qquad =\frac{1}{|B_0|\,|I|}\sum_{x\in B,\,y\in B_0,\,z\in
I}\chi(x+yz)e_p\(\Tr(a(x+yz))\)+O(np^{-\delta}|B|).
\end{eqnarray*}
We now estimate
\begin{eqnarray*}
& & \left|\sum_{x\in B,\,y\in B_0,\,z\in I}\chi(x+yz)e_p\(\Tr(a(x+yz))\)\right| \\
& \le & \sum_{x\in B,\,y\in B_0}\bigg|\sum_{z\in I}\chi(x+yz)e_p\(\Tr(a(x+yz))\)\bigg| \\
& \le & \sum_{u\in\F_q}\nu(u)\sup_{y\in B_0}\left|\sum_{z\in I}\chi(u+z)e_p\(\Tr(ay(u+z))\)\right| \\
& \le & \sum_{u\in\F_q}\nu(u)\sup_b\left|\sum_{z\in I}\chi(u+z)e_p\(\Tr(b(u+z))\)\right|,
\end{eqnarray*}
where
$$
\nu(u)=\Big|\Big\{(x,y)\in B\times B_0:\frac{x}{y}=u\Big\}\Big|.
$$

Then
\begin{eqnarray*}
\lefteqn{\sum_{x\in B}\chi(x)e_p(\Tr(ax))} \\
& & \qquad \le\frac{1}{|B_0|\,|I|}\sum_{u\in\F_q}\nu(u)\sup_b\left|\sum_{z\in I}\chi(u+z)e_p\(\Tr(b(u+z))\)\right| + O(np^{-\delta}|B|) \\
& & \qquad \le\frac{1}{|B_0|\,|I|}\sum_{u\in\F_q}\nu(u)\sup_b\frac{|I|}{q}\sum_{\substack{c\\ c-b\in B_I}}\left|\sum_{z\in I}\chi(u+z)e_p\(\Tr(c(u+z))\)\right| + O(np^{-\delta}|B|) \\ %
& & \qquad \le\frac{1}{|B_0|\,q}\sum_{u\in\F_q}\nu(u)\sum_{\substack{c\\ c-b_0\in B_I}}\left|\sum_{z\in I}\chi(u+z)e_p\(\Tr(c(u+z))\)\right| + O(np^{-\delta}|B|) %
\end{eqnarray*}
with the sum
$$
\sum_{\substack{c\\ c-b\in B_I}}\left|\sum_{z\in I}\chi(u+z)e_p\(\Tr(c(u+z))\)\right|
$$
attains its maximum at $b_0\in\F_q$.

Let $r\ge 2$ by any integer. Applying the H\"older inequality
$$
\sum_{u\in\F_q}\nu(u)\sum_{\substack{c\\ c-b_0\in B_I}}\left|\sum_{z\in I}\chi(u+z)e_p\(\Tr(c(u+z))\)\right|\le V_1^{1-\frac{1}{r}} V_2^{\frac{1}{2r}} W^{\frac{1}{2r}}, %
$$
where
$$
V_1=\sum_{u\in\F_q}\nu(u),\quad V_2=\sum_{u\in\F_q}\nu^2(u),
$$
$$
W=\sum_{u\in\F_q}\(\sum_{\substack{c\\ c-b_0\in B_I}}\left|\sum_{z\in I}\chi(u+z)e_p\(\Tr(c(u+z))\)\right|\)^{2r}. %
$$

Observe that
$$
V_1 = |B||B_0|
$$
and
\begin{eqnarray*}
V_2 %
& = & |\{(x_1,x_2,y_1,y_2)\in B\times B\times B_0\times B_0:x_1y_2=x_2y_1\}| \\
& = & \sum_v|\{(x_1,x_2):\frac{x_1}{x_2}=v\}|\,|\{(y_1,y_2):\frac{y_1}{y_2}=v\}| \\
& \le & E(B,B)^{\frac12}E(B_0,B_0)^{\frac12} \\
& < & 2^{\frac94n+1}(\log p)|B|^{\frac{11}{8}}|B_0|^{\frac{11}{8}} \\
& < & 2^{\frac94n+1}(\log p)|B|^{\frac{11}{4}}p^{-\frac{11}{4}n\delta},
\end{eqnarray*}
by the Cauchy-Schwarz inequality, Proposition~\ref{prop:ChangEnergy} and the definition of $B_0$.

Now we bound $W$. Recall that
$$
q=p^n.
$$
Then
\begin{eqnarray*}
\lefteqn{\sum_{u\in\F_q}\(\sum_{\substack{c\\ c-b_0\in B_I}}\left|\sum_{z\in I}\chi(u+z)e_p\(\Tr(c(u+z))\)\right|\)^{2r}} \\
& & \le (q/|I|)^{2r-1}\sum_{u\in\F_q}\sum_{c\in\F_q}\left|\sum_{z\in I}\chi(u+z)e_p\(\Tr(c(u+z))\)\right|^{2r} \\
& & \le (q/|I|)^{2r-1}\sum_{z_1,\,\ldots,\,z_{2r}\in I}\bigg|\sum_{u\in\F_q}\chi((u+z_1)\cdots(u+z_r)(u+z_{r+1})^{q-2}\cdots(u+z_{2r})^{q-2}) \\
& & \qquad \times\sum_{c\in\F_q}e_p\(\Tr(c(z_1+\cdots+z_r-z_{r+1}-\cdots-z_{2r}))\)\bigg| \\
& & = q^{2r}|I|^{1-2r}\sum_{\substack{z_1,\,\ldots,\,z_{2r}\in I\\ z_1+\cdots+z_r=z_{r+1}+\cdots+z_{2r}}} %
\bigg|\sum_{u\in\F_q}\chi((u+z_1)\cdots(u+z_r)(u+z_{r+1})^{q-2}\cdots(u+z_{2r})^{q-2})\bigg|.
\end{eqnarray*}
The last equality is by the orthogonality of additive characters.

For $z_1,\ldots,z_{2r}\in I$ such that at least one of the elements is not repeated twice, the
polynomial $f_{z_1,\ldots,z_{2r}}(u)=(u+z_1)\cdots(u+z_r)(u+z_{r+1})^{q-2}\cdots(u+z_{2r})^{q-2}$
clearly cannot be a $d$-th power. Since $f_{z_1,\ldots,z_{2r}}(u)$ has no more than $2r$ many
distinct roots, Theorem~\ref{thm:Weil} gives
$$
\bigg|\sum_{u\in\F_q}\chi((u+z_1)\cdots(u+z_r)(u+z_{r+1})^{q-2}\cdots(u+z_{2r})^{q-2})\bigg|<2rp^{\frac{n}{2}}.
$$

For those $z_1,\ldots,z_{2r}\in I$ such that every root of $f_{z_1,\ldots,z_{2r}}(u)$ appears at
least twice, we bound $\sum\bigl|\sum_{u\in\F_q}\chi(f_{z_1,\ldots,z_{2r}}(u))\bigr|$ by $q$ times
the number of such $z_1,\ldots,z_{2r}$. Since there are at most $r$ roots in $I$ and for each
$z_1,\ldots,z_{2r}$ there are at most $r$ choices, we obtain a bound $|I|^r r^{2r}p^n$.

Therefore
$$
\sum_{u\in\F_q}\(\sum_{c\in\F_q}\left|\sum_{z\in I}\chi(u+z)e_p\(\Tr(c(u+z))\)\right|\)^{2r}\le
q^{2r}|I|(r^{2r}|I|^{-r} p^n+2rp^{\frac{n}{2}})
$$
and
$$
W^{\frac{1}{2r}} %
\le q|I|^{\frac{1}{2r}}(r|I|^{-\frac12}p^{\frac{n}{2r}}+2p^{\frac{n}{4r}}).
$$

Putting the above estimates together, we have
\begin{eqnarray*}
\frac{1}{|B_0|q}V %
& < & 4^{\frac{n}{r}}(\log p)(|B_0||B|)^{-\frac{1}{r}}|B|^{1+\frac{11}{8r}}p^{-\frac{11}{8}\frac{n}{r}\delta}
|I|^{\frac{1}{2r}}\Big(r|I|^{-\frac12}p^{\frac{n}{2r}}+2p^{\frac{n}{4r}}\Big) \\
& < & 4^{\frac{n}{r}}(\log p)p^{2\frac{n}{r}\delta-\frac{11}{8}\frac{n}{r}\delta}|B|^{1-\frac{5}{8r}}
|I|^{\frac{1}{2r}}\Big(r|I|^{-\frac12}p^{\frac{n}{2r}}+2p^{\frac{n}{4r}}\Big) \\
& < & 2r\cdot4^{\frac{n}{r}}|B|^{1-\frac{5}{8r}}p^{\frac{n}{4r}+\frac58\frac{n}{r}\delta+\frac{\delta}{2r}}\log p \\
& < & 2r\cdot4^{\frac{n}{r}}|B|p^{\frac{n}{4r}+\frac58\frac{n}{r}\delta-\frac58\frac{n}{r}(\frac25+\eps)+\frac{\delta}{2r}}\log p \\
& < & 2r\cdot4^{\frac{n}{r}}|B|p^{-\frac58\frac{n}{r}(\eps-\delta)+\frac{\delta}{2r}}\log p.
\end{eqnarray*}
The second-to-last inequality holds because of \eqref{eq:Chang_H} and by assuming $\delta\ge\frac{n}{2r}$.

Similar to the argument of Chang~\cite{Ch1}, we can show that
$$
p^{-\frac58\frac{n}{r}(\eps-\delta)+\frac{\delta}{2r}}\log p < p^{-\eps^2/4}.
$$
Then we prove the theorem under the condition~\eqref{eq:H_j}.

\bigskip

Now we are at the position to remove the additional hypothesis~\eqref{eq:H_j} on the shape of $B$.
We proceed in several steps and rely essentially on a further key ingredient provided by the following
estimate in Perel'muter and Shparlinski~\cite{PS}.

\begin{prop}\label{prop:PS}
Let $\chi$ be a nonprincipal multiplicative character of $\F_q$ and let $g\in\F_q$ be a generating
element, i.e. $\F_q=\F_p(g)$. Then for any $a\in\F_p$, we have
\begin{equation}\label{eq:PS}
\left|\sum_{t\in\F_p}\chi(g+t)e_p(at)\right|\le np^{1/2}.
\end{equation}
\end{prop}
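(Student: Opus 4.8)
The plan is to first dispose of the additive character $e_p(at)$ by absorbing it into an additive character of $\F_q$, and then to recognize what remains as a complete mixed character sum over an affine line, to which the Riemann Hypothesis for curves applies. Since the trace map $\Tr\colon\F_q\to\F_p$ is a surjective $\F_p$-linear form, I can choose $\alpha\in\F_q$ with $\Tr(\alpha)=a$. For $t\in\F_p$ one has $\Tr(\alpha(g+t))=\Tr(\alpha g)+t\,\Tr(\alpha)=\Tr(\alpha g)+at$, so that $e_p(at)=e_p(-\Tr(\alpha g))\,e_p(\Tr(\alpha(g+t)))$. Writing $\psi_\alpha(w)=e_p(\Tr(\alpha w))$ and substituting $w=g+t$, this yields
$$\left|\sum_{t\in\F_p}\chi(g+t)e_p(at)\right|=\left|\sum_{w\in g+\F_p}\chi(w)\psi_\alpha(w)\right|,$$
so that the problem is reduced to bounding a mixed sum in which both the multiplicative character $\chi$ and the additive character $\psi_\alpha$ now live over the \emph{same} field $\F_q$, summed over the $p$ points of the affine $\F_p$-line $g+\F_p$. (For $n\ge 2$ every such $w$ is nonzero, since $g\notin\F_p$.)

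For this line sum the natural move is to detect membership in $g+\F_p$ through the additive characters annihilating $\F_p$, namely the $\psi_b$ with $\Tr(b)=0$; expanding the resulting inner sums as Gauss sums over $\F_q$ and then setting $c=\alpha+b$ gives
$$\left|\sum_{w\in g+\F_p}\chi(w)\psi_\alpha(w)\right|=\frac{q^{1/2}}{p^{\,n-1}}\left|\sum_{\substack{c\in\F_q,\ c\neq 0\\ \Tr(c)=a}}\overline{\chi(c)}\,e_p(-\Tr(cg))\right|,$$
using that $\bigl|\sum_{w}\chi(w)\psi_c(w)\bigr|=q^{1/2}$ for $c\neq 0$ and $=0$ for $c=0$. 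This dualizes the line sum into a sum over the affine hyperplane $\{\Tr(c)=a\}$, and it is important to observe that duality alone does not close the argument: bounding the hyperplane sum trivially by its number of terms $p^{\,n-1}$ recovers only a bound of size $q^{1/2}$, which is far weaker than the target $np^{1/2}$. Genuine square-root cancellation, from the $p$ terms of the line sum down to $p^{1/2}$, is required, and it cannot be extracted from Theorem~\ref{thm:Weil} applied over $\F_q$, which controls full sums over $\F_q$ and again produces only $q^{1/2}$.

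The main obstacle, therefore, is precisely this square-root cancellation over the subfield $\F_p$, which is the content of Perel'muter and Shparlinski. Its proof proceeds by interpreting $\sum_{w\in g+\F_p}\chi(w)\psi_\alpha(w)$ as the complete sum over the $\F_p$-rational points of the affine line of the rank-one local system $\mathcal L_\chi\otimes\mathcal L_{\psi_\alpha}$ (the tensor of the Kummer sheaf attached to $\chi$ with the Artin--Schreier sheaf attached to $\psi_\alpha$), and invoking the Grothendieck--Lefschetz trace formula together with Weil's Riemann Hypothesis for the associated $L$-function over $\F_p$. The delicate point is that $g\in\F_q\setminus\F_p$, so this sheaf does not descend to $\F_p$ in an obvious way; one must instead work with the $n$ conjugate embeddings $g,g^p,\dots,g^{p^{n-1}}$, equivalently with the degree-$n$ minimal polynomial of $g$, and it is exactly the number $n$ of these conjugates, governing the total weight of the singularities, that produces the factor $n$ in the bound $np^{1/2}$. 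Since the present paper invokes this estimate as a known proposition, I would present the elementary reduction above in full and then cite the Perel'muter--Shparlinski bound as the single deep input supplying the core cancellation.
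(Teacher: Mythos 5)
The paper does not prove Proposition~\ref{prop:PS} at all: it is quoted, in the middle of the proof of Theorem~\ref{thm:CG}, as a known estimate of Perel'muter and Shparlinski~\cite{PS}, and since your argument ultimately defers the core square-root cancellation to that very same citation, your approach coincides with the paper's. Your supplementary steps are correct --- absorbing $e_p(at)$ into $\psi_\alpha$ with $\Tr(\alpha)=a$, the Gauss-sum dualization (which you rightly note cannot close the argument by itself), and the sketch of the function-field Riemann Hypothesis mechanism in which the $n$ conjugates of $g$ account for the factor $n$ in $np^{1/2}$ --- but they serve as context for, not a replacement of, the citation on which both you and the paper rely.
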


First we make the following observation.

Let $H_1\ge H_2\ge\ldots\ge H_n$. If $H_1<p^{\frac12+\frac{\varepsilon}{2}}$, we may clearly write
$B$ as a disjoint union of boxes $B_{\alpha}\subset B$ satisfying the first condition
in~\eqref{eq:H_j} and $|B_{\alpha}|>(\frac12
p^{-\frac{\varepsilon}{2}})^n|B|>2^{-n}p^{(\frac25+\frac{\varepsilon}{2})n}$. Since~\eqref{eq:H_j}
holds for each $B_{\alpha}$, we have
$$
\left|\sum_{x\in B_{\alpha}}\chi(x)e_p(\Tr(ax))\right|<cnp^{-\tau}|B_{\alpha}|.
$$
Hence
$$
\left|\sum_{x\in B}\chi(x)e_p(\Tr(ax))\right|<cnp^{-\tau}|B|.
$$
Therefore we may assume that $H_1>p^{\frac12+\frac{\varepsilon}{2}}$.

\bigskip
\noindent{\it Case 1. $n$ is odd.}
\bigskip

We denote $I_i=[N_i+1,N_i+H_i]$. Then
\begin{eqnarray*}
\lefteqn{\left|\sum_{x\in B}\chi(x)e_p(\Tr(ax))\right|} \\
& & = \left|\sum_{\substack{x_i\in I_i\\ 2\le i\le n}}\sum_{x_1\in I_1}\chi\(x_1+x_2\frac{\omega_2}{\omega_1}+\cdots+x_n\frac{\omega_n}{\omega_1}\) %
e_p(\Tr(ax_1\omega_1+x_2\omega_2+\cdots+x_n\omega_n))\right| \\
& & \le \left|\sum_{(x_2,\ldots,x_n)\in D^c}\sum_{x_1\in I_1}\chi\(x_1+x_2\frac{\omega_2}{\omega_1}+\cdots+x_n\frac{\omega_n}{\omega_1}\) %
e_p(\Tr(ax_1\omega_1+x_2\omega_2+\cdots+x_n\omega_n))\right| \\
& & \qquad + \left|\sum_{(x_2,\ldots,x_n)\in D}\sum_{x_1\in I_1}\chi\(x_1+x_2\frac{\omega_2}{\omega_1}+\cdots+x_n\frac{\omega_n}{\omega_1}\) %
e_p(\Tr(ax_1\omega_1+x_2\omega_2+\cdots+x_n\omega_n))\right|
\end{eqnarray*}
with
$$
D = \left\{(x_2,\ldots,x_n)\in I_2\times\cdots\times I_n : %
\F_p\(x_2\frac{\omega_2}{\omega_1}+\cdots+x_n\frac{\omega_n}{\omega_1}\)\neq\F_q\right\}
$$
and
$$
D^c = I_2\times\cdots\times I_n\setminus D.
$$

Using~\eqref{eq:PS} we estimate the first sum as
\begin{eqnarray*}
\lefteqn{\left|\sum_{(x_2,\ldots,x_n)\in D^c}\sum_{x_1\in I_1}\chi\(x_1+x_2\frac{\omega_2}{\omega_1}+\cdots+x_n\frac{\omega_n}{\omega_1}\) %
e_p(\Tr(ax_1\omega_1+x_2\omega_2+\cdots+x_n\omega_n))\right|} \\
& & = \left|\sum_{\substack{x_i\in I_i\\ 2\le i\le n}}e_p(\Tr(a(x_2\omega_2+\cdots+x_n\omega_n))) %
\sum_{x_1\in I_1}\chi\(x_1+x_2\frac{\omega_2}{\omega_1}+\cdots+x_n\frac{\omega_n}{\omega_1}\)e_p(\Tr(ax_1\omega_1))\right| \\
& & \le \sum_{\substack{x_i\in I_i\\ 2\le i\le n}} %
\left|\sum_{x_1\in I_1}\chi\(x_1+x_2\frac{\omega_2}{\omega_1}+\cdots+x_n\frac{\omega_n}{\omega_1}\)e_p(\Tr(a\omega_1)x_1)\right| \\
& & = \sum_{\substack{x_i\in I_i\\ 2\le i\le n}} %
\left|\sum_{x_1\in\F_p}\chi\(x_1+x_2\frac{\omega_2}{\omega_1}+\cdots+x_n\frac{\omega_n}{\omega_1}\)e_p(\Tr(a\omega_1)x_1) %
\cdot\frac{1}{p}\sum_{b\in\F_p}\sum_{x'_1\in I_1}e_p(b(x_1-x'_1))\right| \\
& & \le \frac{1}{p}\sum_{\substack{x_i\in I_i\\ 2\le i\le n}}\sum_{b\in\F_p} %
\left|\sum_{x_1\in\F_p}\chi\(x_1+x_2\frac{\omega_2}{\omega_1}+\cdots+x_n\frac{\omega_n}{\omega_1}\)e_p((\Tr(a\omega_1)+b)x_1)\right| %
\cdot\left|\sum_{x'_1\in I_1}e_p(-bx'_1)\right| \\
& & \le \frac{1}{p}np^{1/2}\frac{|B|}{H_1}\sum_{b\in\F_p}\left|\sum_{x'_1\in I_1}e_p(bx'_1)\right| \\
& & \le c(n)p^{\frac12}\log p\frac{|B|}{H_1}.
\end{eqnarray*}

For the second sum, we have
\begin{eqnarray*}
& & \left|\sum_{(x_2,\ldots,x_n)\in D}\sum_{x_1\in I_1}\chi\(x_1+x_2\frac{\omega_2}{\omega_1}+\cdots+x_n\frac{\omega_n}{\omega_1}\) %
e_p(\Tr(ax_1\omega_1+x_2\omega_2+\cdots+x_n\omega_n))\right| \\
& & \qquad \le p|D|\le p\sum_G\left|G\bigcap\operatorname{Span}_{\F_p}\(\frac{\omega_2}{\omega_1},\ldots,\frac{\omega_n}{\omega_1}\)\right|,
\end{eqnarray*}
where $G$ runs over nontrivial subfields of $\F_q$. Since $q=p^n$ and $n$ is odd, obviously
$[\F_q:G]\ge 3$. Hence $[G:\F_p]\le\frac{n}{3}$. Furthermore, since
$\{\omega_1,\ldots,\omega_n\}$ is a basis of $\F_q$ over $\F_p$,
$1\notin\operatorname{Span}_{\F_p}(\frac{\omega_2}{\omega_1},\ldots,\frac{\omega_n}{\omega_1})$
and the proceeding implies that
$$
\operatorname{dim}_{\F_p}\(G\bigcap\operatorname{Span}_{\F_p}
\(\frac{\omega_2}{\omega_1},\ldots,\frac{\omega_n}{\omega_1}\)\)\le\frac{n}{3}-1.
$$

Therefore, under our assumption on $|H_1|$, we have
$$
\left|\sum_{x\in B}\chi(x)e_p(\Tr(ax))\right|\le c(n)((\log p)p^{-\frac{\varepsilon}{2}}|B|+p^{\frac{n}{3}}) %
< \left(c(n)(\log p)p^{-\frac{\varepsilon}{2}}+p^{-\frac{n}{15}}\right)|B|,
$$
since $|B|>p^{\frac25 n}$. This proves our claim.

\bigskip
\noindent{\it Case 2. $n$ is even.}
\bigskip

In view of the earlier discussion, we have
\begin{eqnarray*}
\lefteqn{\left|\sum_{x\in B}\chi(x)e_p(\Tr(ax))\right|} \\
& & \le \left|\sum_{(x_2,\ldots,x_n)\in D_2^c}\sum_{x_1\in I_1}\chi\(x_1+x_2\frac{\omega_2}{\omega_1}+\cdots+x_n\frac{\omega_n}{\omega_1}\) %
e_p(\Tr(ax_1\omega_1+x_2\omega_2+\cdots+x_n\omega_n))\right| \\
& & \quad + \left|\sum_{(x_2,\ldots,x_n)\in D_2}\sum_{x_1\in I_1}\chi\(x_1+x_2\frac{\omega_2}{\omega_1}+\cdots+x_n\frac{\omega_n}{\omega_1}\) %
e_p(\Tr(ax_1\omega_1+x_2\omega_2+\cdots+x_n\omega_n))\right|,
\end{eqnarray*}
where
$$
D_2 = \left\{(x_2,\ldots,x_n)\in I_2\times\cdots\times I_n :
\(x_2\frac{\omega_2}{\omega_1}+\cdots+x_n\frac{\omega_n}{\omega_1}\)\in F_2\right\},
$$
$$
D_2^c = I_2\times\cdots\times I_n\setminus D_2,
$$
and $F_2$ is the subfield of size $p^{n/2}$.

Our only concern is to bound the second sum, namely
$$
\varpi=\left|\sum_{(x_2,\ldots,x_n)\in D_2}\sum_{x_1\in I_1}\chi\(x_1+x_2\frac{\omega_2}{\omega_1}+\cdots+x_n\frac{\omega_n}{\omega_1}\) %
e_p(\Tr(ax_1\omega_1+x_2\omega_2+\cdots+x_n\omega_n))\right|.
$$

First, we note that since $1,\frac{\omega_2}{\omega_1},\ldots,\frac{\omega_n}{\omega_1}$ are
independent, $\frac{\omega_j}{\omega_1}\in F_2$ for at most $\frac{n}{2}-1$ many $j$'s. After
reordering, we may assume that $\frac{\omega_j}{\omega_1}\in F_2$ for $2\le j\le k$
and $\frac{\omega_j}{\omega_1}\notin F_2$ for $k+1\le j\le n$, where
$k\le\frac{n}{2}$. we also assume that $H_{k+1}\le\cdots\le H_n$. Fix
$x_2,\ldots,x_{n-1}$. Obviously there is no more than one value of $x_n$ such that
$x_2\frac{\omega_2}{\omega_1}+\cdots+x_n\frac{\omega_n}{\omega_1}\in F_2$, since otherwise
$(x_n-x'_n)\frac{\omega_n}{\omega_1}\in F_2$ with $x_n\neq x'_n$ contradicting the fact that
$\frac{\omega_n}{\omega_1}\notin F_2$.

Therefore,
$$
|D_2|\le|I_2|\cdots|I_{n-1}|
$$
and
$$
\varpi\le\frac{|B|}{H_n}.
$$
If $H_n>p^{\tau}$, we are done. Otherwise
\begin{equation}\label{eq:H_{k+1}}
H_{k+1}\cdots H_n\le p^{(n-k)\tau} < p^{\tau}.
\end{equation}

Define
\begin{equation}\label{eq:B_2}
B_2=\left\{x_1+x_2\frac{\omega_2}{\omega_1}+\cdots+x_k\frac{\omega_k}{\omega_1} : x_i\in
I_i,\,1\le i\le k\right\}.
\end{equation}

Hence $B_2\subset F_2$ and by~\eqref{eq:H_{k+1}}
$$
|B_2|>\frac{|B|}{H_{k+1}\cdots H_n}>p^{\frac25-\frac{\tau}{2}n}>p^{\frac{n}{3}}.
$$

(We assume $\tau<\frac{2}{15}$.)

Clearly, if $(x_2,\ldots,x_n)\in D_2$, then
$z=x_{k+1}\frac{\omega_{k+1}}{\omega_1}+\cdots+x_n\frac{\omega_n}{\omega_1}\in F_2$. Assume
$\chi|_{F_2}$ non-principal. Then by completing the sum over $y$ and recalling the classical
estimates for Gaussian sums in finite fields~\cite[Theorem 5.11]{LN} and~\eqref{eq:B_2}, we have
$$
\left|\sum_{y\in B_2}\chi(y+z)e_p(\Tr(a\omega_1(y+z)))\right|\le(\log p)^{\frac{n}{2}}\max_{\psi} %
\left|\sum_{x\in F_2}\psi(x)\chi(x)\right|\le(\log p)^{\frac{n}{2}}|F_2|^{\frac12}\le p^{-\frac{n}{15}}|B_2|,
$$
where $\psi$ runs over all additive characters. Therefore, clearly
$$
\varpi\le H_{k+1}\cdots H_n p^{-\frac{n}{15}}|B_2|=p^{-\frac{n}{15}}|B|
$$
providing the required estimate.

If $\chi|_{F_2}$ is principal, then obviously
$$
\varpi = \left|\sum_{x_1\in I_1}\sum_{(x_2,\ldots,x_n)\in D_2} %
e_p(\Tr(a(x_1\omega_1+\cdots+x_n\omega_n)))\right| \le H_1 |D_2| = \left|F_2\cap\frac{1}{\omega_1}B\right|
$$
and
$$
\left|\sum_{x\in B}\chi(x)e_p(\Tr(ax))\right| = |\omega_1F_2\cap B| + O_n(p^{-\tau}|B|).
$$
This completes the proof of Theorem~\ref{thm:CG}.
\end{proof}

\section{Proof of Theorem~\ref{thm:KG}}

Similar to the proof of Theorem~\ref{thm:CG}, by breaking up $B$ in smaller boxes, we may assume
$$
H_1\asymp\cdots\asymp H_n\asymp p^{\frac14+\eps}.
$$

Then, using the arguments in the proof of Theorem~\ref{thm:CG}, we can prove Theorem~\ref{thm:KG}
along the lines of Konyagin~\cite{Kon}.

\section*{Acknowledgements}
The author thanks Professor Igor Shparlinski for his helpful comments on an earlier version of the paper. The author also thanks Professor Sergei Konyagin for sending his preprint.

Part of this work was done while the author was visiting the Morningside Center of Mathematics, whose hospitality is gratefully acknowledged. The author was supported by the National Natural Science Foundation of China (Grant No. 10671056).

\providecommand{\bysame}{\leavevmode\hbox to3em{\hrulefill}\thinspace}

\end{document}